\theoremstyle{plain}
\newtheorem{thm}{Theorem}[section]
\newtheorem{lem}[thm]{Lemma}
\newtheorem{lemma}[thm]{Lemma}
\newtheorem{prop}[thm]{Proposition}
\newtheorem{proposition}[thm]{Proposition}
\newtheorem*{thm*}{Theorem}
\theoremstyle{definition}
\newtheorem{ex}[thm]{Example}
\newtheorem{example}[thm]{Example}
\theoremstyle{remark}
\newtheorem{rmk}[thm]{Remark}
\newcommand{\drpbk}{\ar[dr, phantom, "\lrcorner", very near start]}
\newcommand{\dlpbk}{\ar[dl, phantom, "\llcorner", very near start]}
\newcommand{\cd}[2][]{\vcenter{\hbox{\xymatrix#1{#2}}}}
\newcommand{\ds}[1]{\mathbf{#1}} 
\providecommand{\kat}[1]{\text{\textbf{\textsl{#1}}}}
\newcommand{\Set}{\kat{Set}}
\newcommand{\Grpd}{\kat{Grpd}}
\newcommand{\grpd}{\kat{grpd}}
\newcommand{\simplexcat}{\boldsymbol \Delta}
\newcommand{\LIN}{\kat{LIN}}
\newcommand{\F}{\mathbb{F}} 
\newcommand{\B}{\mathbb{B}} 
\newcommand{\I}{\mathbb{I}} 
\newcommand{\Sur}{\mathbb{S}} 
\newcommand{\Sp}{\mathbb{S}_p} 
\newcommand{\C}{\mathcal{C}}
\newcommand{\D}{\mathcal{D}}
\newcommand{\N}{\mathbb{N}}
\newcommand{\Q}{\mathbb{Q}}
\DeclareMathOperator{\Map}{Map}
\DeclareMathOperator{\Aut}{Aut}
\DeclareMathOperator{\Id}{Id}
\DeclareMathOperator{\id}{id}
\DeclareMathOperator{\Dec}{Dec}
\newcommand{\xra}{\xrightarrow}
\newcommand{\surj}{\twoheadrightarrow}
\newcommand{\eq}{\simeq}
\newcommand{\name}[1]{\ulcorner #1\urcorner}
\newcommand{\fatnerve}{\mathbf{N}}
\newcommand{\Smonad}{\mathsf{S}}
\newcommand{\SSS}{\mathsf{S}}
\newcommand{\tensor}{\otimes}
\newcommand{\thg}{{\mathord{\text{--}}}}
\newcommand{\fibre}{\phi}
\newcommand{\op}{^{\text{{\rm{op}}}}}
\renewcommand{\emptyset}{\varnothing}
\DeclarePairedDelimiter\card{\lvert}{\rvert}
\newcommand{\arxiv}[1]{\href{http://arxiv.org/abs/#1}{arxiv:#1}}
\author{Louis Carlier}
\title{Hereditary species as monoidal decomposition spaces, comodule bialgebras, and operadic categories}
\date{}
\newcommand{\address}{{%
  \bigskip
  \footnotesize
  \textsc{Departament de Matemàtiques \\
  \indent Universitat Autònoma de Barcelona}\par\nopagebreak 
  \textit{E-mail address}: \href{mailto:louiscarlier@mat.uab.cat}{\nolinkurl{louiscarlier@mat.uab.cat}}
}}
\begin{document}

\maketitle

\begin{abstract}
We show that Schmitt's hereditary species induce monoidal decomposition spaces, and exhibit Schmitt's bialgebra construction as an instance of the general bialgebra construction on a monoidal decomposition space.
We show furthermore that this bialgebra structure coacts on the underlying restriction-species bialgebra structure so as to form a comodule bialgebra.
Finally, we show that hereditary species induce a new family of examples of operadic categories in the sense of Batanin and Markl~\cite{BM}. 
\end{abstract}

\phantomsection
\addcontentsline{toc}{section}{Introduction}
\section*{Introduction}
\label{sec:intro}

The notion of decomposition space was introduced in combinatorics by Gálvez-Carrillo, Kock, and Tonks~\cite{GKT1, GKT2} as a generalisation of Möbius categories~\cite{Leroux76} in turn generalising locally finite posets~\cite{Rota} and Cartier–Foata finite-decomposition monoids~\cite{CF}, for the purpose of incidence (co)algebras and Möbius inversion. 
The same notion was introduced by Dyckerhoff and Kapranov~\cite{DK} under the name unital 2-Segal space, for use in homological algebra and representation theory.
Decomposition spaces are simplicial groupoids that satisfy an exactness condition weaker than the Segal condition: while the Segal condition essentially characterises categories (the ability to compose), the decomposition space axiom expresses the ability to decompose.

Not all coalgebras in combinatorics arise from posets, monoids or categories. In a broader perspective important examples come from the Waldhausen $S$-construction in topology and Hall algebras of various flavours. In combinatorics, an important class of coalgebras are the coalgebras of restriction species in the sense of Schmitt~\cite{Schmitt} (see also \cite[\S 8.7]{AM}), and the more general notion of directed restriction species of \cite{GKT:restrict}, shown to be examples of decomposition spaces.  
Examples of these notions are given by the chromatic Hopf algebra of graphs and the Butcher--Connes--Kreimer Hopf algebra of rooted trees, as well as many related structures such as matroids, posets (talking here about a Hopf algebra of all posets, not just a coalgebra of an individual poset).  Most of these examples are decomposition spaces that do not satisfy the Segal condition.

In these examples, the comultiplication is given by splitting some structure into two parts of equal status, constituting then the left and right tensor factors.  However, many important Hopf algebras in combinatorics are more asymmetric, having on one side of the comultiplication a monomial instead of a linear tensor factor.
In the Segal case, the comultiplications with both tensor factors linear are typical for incidence coalgebras of categories, whereas the comultiplications with a monomial in the left-hand tensor factor are typical for operads. Indeed, incidence coalgebras and bialgebras of operads are another important class covered by the decomposition space framework, see \cite{GKT:combinatorics} and \cite{KW}.

Just as there are many linear-linear coalgebras that do not come from categories, there are important examples of multilinear-linear coalgebras that do not come from operads. An important class of such coalgebras is given by Schmitt's hereditary species~\cite{Schmitt}. These are structures that admit restriction (like restriction species) but also admit induction along quotient maps. Formally these are functors $H\colon \Sp \to \Set$, where $\Sp$ denotes the category of finite sets and partially defined surjections. The induced comultiplication works by summing over all partitions of the underlying set, and then putting the monomial of all the blocks (with restricted structure) on the left and putting the quotient structure on the right. Schmitt~\cite{Schmitt} identified the properties needed for this to define a coassociative coalgebra (in fact a bialgebra, and most often a Hopf algebra) and exhibited important examples, such as in particular the hereditary species of simple graphs.

The present paper shows that every hereditary species constitutes an example of a monoidal decomposition space, and that Schmitt's bialgebra construction is a special case of the general incidence bialgebra construction for monoidal decomposition spaces.
These decomposition spaces are generally \emph{not} Segal
spaces (see Remark~\ref{notSegal}), and can therefore be seen as the first class of examples of decomposition spaces filling the missing entry in the following table.
\begin{center}
\begin{tabular}{@{}lll@{}}
\toprule
                &     \emph{linear-linear}         &  \emph{multilinear-linear} \\ \midrule
    \emph{Segal-type}  & posets, monoids, categories    &     operads \\ \midrule
 \multirow{2}{*}{\emph{non-Segal type}} & restriction species, $S$-construction,  & \\
 & Hall algebras & \\
 \bottomrule
\end{tabular}
\end{center}
The construction is similar to the two-sided bar construction
for operads, see \cite{KW}.

\medskip

After a brief review in Section~\ref{sec:grpd&ds} of needed notions about groupoids and decomposition spaces, Section~\ref{sec:hersp} summarises Schmitt's hereditary species.
In Section~\ref{sec:dssurj}, we realise the hereditary species of finite sets as a Segal space $\ds{S}$ (hence a decomposition space) and establish some finiteness conditions.

\medskip
\noindent
{\bf Proposition~\ref{SSSegal} and \ref{Xfinite}.} {\em
The pseudosimplicial groupoid $\ds{S}$ is Segal, and is complete, locally finite, locally discrete, and of locally finite length.
}

\medskip

In Section~\ref{sec:hersp&ds}, exploiting the hereditary species of finite sets, we show that every hereditary species $H$ defines a monoidal decomposition space $\ds{H}$, and hence a bialgebra at the groupoid-slice level, and we recover Schmitt's construction by taking homotopy cardinality.

\medskip
\noindent
{\bf Proposition~\ref{Hds} and \ref{Schmittcoincide}.} {\em
For every hereditary species $H$, the simplicial groupoid $\ds H$ is a monoidal decomposition space.
The incidence bialgebra obtained by taking homotopy cardinality
coincides with the Schmitt bialgebra associated to $H$.
}

\medskip

Every hereditary species is also a restriction species, and the free algebra on its incidence coalgebra is therefore a bialgebra.
In Section~\ref{sec:comodulebialgebra}, we show that the incidence bialgebra of a hereditary species coacts on this bialgebra, so as constitute a \emph{comodule bialgebra}. 

\medskip
\noindent
{\bf Proposition~\ref{prop:comodulebialg}.} {\em
  The hereditary-species bialgebra $B$ coacts on the restriction-species bialgebra $A$, so as to make $A$ a left comodule bialgebra over $B$.
}

\medskip

Comodule bialgebras have been found important recently in numerical analysis~\cite{CEFM} and in stochastic analysis~\cite{BHZ}, and there are general constructions based on operads and trees \cite{Foissy:operads}, \cite{Kock:BD-bis}.
The incidence comodule bialgebras of hereditary species introduced here constitute a new general class of comodule bialgebras, not related to trees.

In Section~\ref{sec:operadic}, we describe a different construction on hereditary species, showing that simple hereditary species induce operadic categories in the sense of Batanin and Markl~\cite{BM}. Precisely we define a functor from simple hereditary species to operadic categories. This is interesting in its own right, as it constitutes a new family of examples of operadic categories which had not been observed before. The construction is not directly related to decomposition spaces, but suggests that further connections are to be discovered.

\paragraph{Acknowledgements}
The author would like to thank Joachim Kock for his advice and support all along the project. The author was supported by a PhD grant attached to MTM2013-42293-P, and by the FEDER-MINECO grant MTM2016-80439-P from the Spanish Ministry of Economy and Competitiveness.

\section{Groupoids and decomposition spaces}\label{sec:grpd&ds}

We work in the $2$-category $\Grpd$ of groupoids, maps, and homotopies.
All the notions will be invariant under homotopy equivalences. We usually omit the term homotopy and say `pullback', `fibre', etc. instead of `homotopy pullback', `homotopy fibre', etc.
The squares are commutative up-to-homotopy, they come equipped with an (invertible) $2$-cell.

\paragraph{Pullbacks and fibres}
    The main tool used throughout this paper are (homotopy) pullbacks. We use the following lemmas many times.

\begin{lem}[{\cite{CK:HTCG},\cite[Lemma 4.4.2.1]{Lurie}}]\label{prismlemma}
  Given a prism diagram of groupoids
  \begin{center}
    \begin{tikzcd}
        X   \ar[r, ""] \ar[d, ""]
          & X' \ar[r, ""]\ar[d, ""] \drpbk
          & X''\ar[d, ""]\\
        Y \ar[r, ""'] & Y' \ar[r, ""] & Y''
    \end{tikzcd}
  \end{center}
    in which the right-hand square is a pullback. 
    Then the outer rectangle is a pullback if and only of the left-hand square is.
\end{lem}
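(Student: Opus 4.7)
The plan is to prove the prism lemma via the fibrewise characterization of pullbacks in groupoids: a square
\[
\begin{tikzcd}
A \ar[r] \ar[d] & B \ar[d] \\
C \ar[r] & D
\end{tikzcd}
\]
is a (homotopy) pullback if and only if, for every object $c \in C$ with image $d \in D$, the induced map on (homotopy) fibres $\mr{fib}_c(A \to C) \to \mr{fib}_d(B \to D)$ is an equivalence of groupoids. This characterization is standard and reduces the pullback condition to a statement about equivalences of fibres, which is better suited to a two-out-of-three argument.

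First I would fix a point $y \in Y$, and write $y'$ and $y''$ for its images in $Y'$ and $Y''$. Applying the fibrewise criterion to the three squares produces a commuting triangle of comparison maps
\[
\begin{tikzcd}
\mr{fib}_y(X \to Y) \ar[r] \ar[dr] & \mr{fib}_{y'}(X' \to Y') \ar[d] \\
& \mr{fib}_{y''}(X'' \to Y'')
\end{tikzcd}
\]
where: the horizontal arrow is an equivalence iff the left-hand square is a pullback at $y$; the vertical arrow is an equivalence iff the right-hand square is a pullback at $y'$; and the diagonal arrow is an equivalence iff the outer rectangle is a pullback at $y$. Since the right-hand square is assumed to be a pullback, the vertical arrow is always an equivalence. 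By two-out-of-three for equivalences of groupoids, the horizontal arrow is an equivalence iff the diagonal one is; running this over all $y \in Y$ yields the claimed equivalence of pullback conditions.

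The only step requiring care is the bookkeeping of basepoints when passing between $Y$, $Y'$, and $Y''$: one must verify that the diagonal composite $\mr{fib}_y(X \to Y) \to \mr{fib}_{y''}(X'' \to Y'')$ induced by pasting really is (homotopic to) the canonical comparison map coming from the outer rectangle. This is immediate from the universal property of fibres once one unpacks the definitions, but it is the place where the 2-categorical nature of $\Grpd$ matters, since all squares commute only up to a specified $2$-cell. Apart from this, the argument is purely formal.
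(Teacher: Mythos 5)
The paper does not prove this lemma itself; it is quoted with references to \cite{CK:HTCG} and to Lurie's Lemma 4.4.2.1, so there is no in-paper argument to compare against. Your proof is correct and is the standard one: reduce each of the three pullback conditions to equivalences of homotopy fibres via the fibrewise criterion (the paper's Lemma~\ref{fibreslemma}), observe that the three comparison maps form a commuting triangle, and conclude by two-out-of-three. You correctly identify the only delicate point, namely that the comparison map for the outer rectangle (whose $2$-cell is the pasting of the two inner $2$-cells) agrees up to homotopy with the composite of the two inner comparison maps; once that is checked, the argument is complete.
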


Given a map of groupoids $p\colon X \to S$ and an object $s \in S$, the \emph{fibre} $X_s$ of $p$ over $s$ is the pullback
\begin{center}
    \begin{tikzcd}
        X_s \ar[r, ""] 
            \ar[d, ""'] 
            \drpbk
             & X \ar[d, "p"]\\
        1 \ar[r, "\name{s}"'] & S.
    \end{tikzcd}
\end{center}


\begin{lem}[\cite{CK:HTCG}]\label{fibreslemma}
  A square of groupoids
\begin{center}
    \begin{tikzcd}
        P \ar[r, "u"] \ar[d] & Y \ar[d] \\
        X \ar[r] & S
    \end{tikzcd}
\end{center}
 is a pullback if and only if for each $x\in X$ the induced comparison map $u_x\colon P_x \to Y_{fx}$ is an equivalence.
\end{lem}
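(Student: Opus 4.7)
The plan is to split the biconditional into two implications: the forward direction follows from a pasting argument via Lemma~\ref{prismlemma}, while the converse reduces to the principle that equivalences of groupoids are detected fibrewise. For the ``only if'' direction, I would assume the given square is a pullback and paste the defining fibre square for $P_x$ onto it to form the prism
\begin{center}
\begin{tikzcd}
  P_x \ar[r] \ar[d] \drpbk & P \ar[r, "u"] \ar[d] \drpbk & Y \ar[d] \\
  1 \ar[r, "\name{x}"'] & X \ar[r, "f"'] & S.
\end{tikzcd}
\end{center}
The right square is a pullback by hypothesis, and the left square is a pullback by the very definition of $P_x$, so Lemma~\ref{prismlemma} yields that the outer rectangle is also a pullback. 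But the outer rectangle is exactly the defining fibre square of $Y \to S$ over $\name{fx} = f \circ \name{x}$, whence the canonical comparison $u_x \colon P_x \to Y_{fx}$ is an equivalence.

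For the converse, suppose each $u_x$ is an equivalence, and form the actual homotopy pullback $P' := X \times_S Y$ with universal comparison map $\phi \colon P \to P'$ over $X$. Applying the forward direction (just proved) to the pullback square defining $P'$ gives canonical equivalences $(P')_x \simeq Y_{fx}$, and by construction the induced map on fibres $\phi_x \colon P_x \to (P')_x$ coincides under these identifications with $u_x$. Thus $\phi$ is a fibrewise equivalence over $X$; invoking the fact that a map between groupoids over a common base which is a fibrewise equivalence is itself an equivalence, we conclude that $\phi$ is an equivalence of groupoids, and hence that the original square is a homotopy pullback.

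The main obstacle will be the last step of the converse, namely showing that a fibrewise equivalence over a common base is an equivalence of total groupoids. This is standard folklore for the $2$-category $\Grpd$ (essentially because every object of the target lies in some fibre, and morphism spaces of a groupoid over $X$ assemble from those of the fibres), but it is not established earlier in the excerpt and would either need to be cited from the literature or verified directly by checking essential surjectivity and full faithfulness of $\phi$ fibrewise.
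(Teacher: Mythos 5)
The paper offers no proof of this lemma at all: it is quoted from the reference \cite{CK:HTCG} (listed as ``in preparation''), so there is no argument in the text to compare yours against. Judged on its own, your proof is correct in structure. The forward direction is complete: pasting the defining fibre square for $P_x$ onto the given square and invoking Lemma~\ref{prismlemma} exhibits the outer rectangle as the fibre square of $Y\to S$ over $\name{fx}$, which is exactly the statement that $u_x$ is an equivalence. The converse, via the comparison map $\phi\colon P\to P'=X\times_S Y$, is also the right move, and you correctly note that $u_x\simeq (\pi_Y)_x\circ\phi_x$ with $(\pi_Y)_x$ an equivalence by the forward direction, so $\phi$ is a fibrewise equivalence over $X$.

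The one point to treat with care is the ``folklore'' ingredient you flag at the end. Observe that the statement ``a map over $X$ that is a fibrewise equivalence is an equivalence'' is precisely the special case of the lemma in which the bottom map is $\id_X$ (a square over $\id_X$ is a homotopy pullback if and only if the top map is an equivalence), so you cannot cite the lemma to prove it without circularity; it must be established independently. For $1$-groupoids this is a short direct check: writing a groupoid over $X$ as a functor $X\to\Grpd$ via straightening (or, more elementarily, using that every object $b\in B$ lies in the essential image of some fibre $B_{qb}$, and comparing automorphism groups through the exact sequence $\pi_1(X,x)\to\pi_0(A_x)\to\pi_0(A)\to\pi_0(X)$ together with $\Aut_{A}(a)\to\Aut_X(pa)$ having fibre $\Aut_{A_{pa}}(a)$), one sees that fibrewise essential surjectivity and full faithfulness imply the same for $\phi$. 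With that lemma supplied, your argument is a complete and valid proof.
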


\paragraph{Slices and linear functors}
Recall that the objects of the slice category $\Grpd_{/I}$ are maps of groupoids with codomain $I$.
Pullback along a morphism $f\colon J \to I$, defines an functor $f^*\colon \Grpd_{/I} \to \Grpd_{/J}$. This functor is right adjoint to the functor $f_!\colon \Grpd_{/J} \to \Grpd_{/I}$ given by postcomposing with $f$.
A span
$I \xleftarrow{p} M \xrightarrow{q} J$
induces a functor between the slices by pullback and postcomposition
\[\Grpd_{/I} \xra{p^*} \Grpd_{/M} \xra{q_!} \Grpd_{/J}.\]
A functor is \emph{linear} if it is homotopy equivalent to a functor induced by a span.
The following Beck-Chevalley rule holds for groupoids:
for any pullback square
\begin{center}
  \begin{tikzcd}
      J \ar[r, "f"] 
         \ar[d, "p"'] 
         \drpbk
         & I \ar[d, "q"]\\
      V \ar[r, "g"'] & U,
  \end{tikzcd}
\end{center}
the functors
$
    p_!f^*, g^*q_!\colon \Grpd_{/I} \to \Grpd_{/V}
$
are naturally homotopy equivalent (see \cite{GHK} for the technical details regarding coherence of these equivalences in the $\infty$-groupoids setting).
By the Beck-Chevalley rule, the composition of two linear functors is linear.
The slice groupoid $\Grpd_{/I}$ plays the role of the vector space with basis $I$.

We denote by $\LIN$ the monoidal $2$-category of slice categories  and linear functors, with the tensor product induced by the cartesian product:
\[
    \Grpd_{/I} \otimes \Grpd_{/J} := \Grpd_{/ I\times J},
\]
with neutral object $\Grpd \eq \Grpd_{/1}$.
For an extended treatment of linear functors and homotopy linear algebra, we refer to \cite{GKT:HLA}.

\paragraph{Decomposition spaces and incidence coalgebras}
The notion of decomposition space was introduced by Gálvez-Carrillo, Kock, and Tonks \cite{GKT1}, in the general setting of simplicial $\infty$-groupoids, and independently by Dyckerhoff and Kapranov \cite{DK} under the name unital $2$-Segal space.
The natural level of generality for decomposition spaces in combinatorics is that of simplicial groupoids, because many combinatorial objects have symmetries, which are taken care of by the groupoid formalism. For a survey motivated by combinatorics, see \cite{GKT:combinatorics}.

    The simplex category $\simplexcat$ has an active-inert factorisation system. An arrow is active when it preserves endpoints, and is inert if it is distance preserving.
    In the present contribution, a \emph{decomposition space} $X\colon \simplexcat\op \to \Grpd$ is a simplicial groupoid such that the image of any active-inert pushout in $\simplexcat$ is a pullback of groupoids.
It is enough to check that the following squares are pullbacks, where $0 \le k \le n$:
  \begin{center}
    \begin{tikzcd}
        X_{n+1}   \ar[r, "s_{k+1}"] 
                  \ar[d, "d_\bot"']
                  \drpbk
          & X_{n+2} \ar[d, "d_\bot"] \\
        X_n \ar[r, "s_k"'] & X_{n+1},
    \end{tikzcd} \!\!\!
    \begin{tikzcd}
       X_{n+2} \ar[d, "d_\bot"] 
          & X_{n+3} \ar[l, "d_{k+2}"']
                    \ar[d, "d_\bot"]
                    \dlpbk \\
         X_{n+1}  & X_{n+2} \ar[l, "d_{k+1}"],
    \end{tikzcd} \!\!\!
    \begin{tikzcd}
        X_{n+1}   \ar[r, "s_k"] 
                  \ar[d, "d_\top"']
                  \drpbk
          & X_{n+2} \ar[d, "d_\top"]  \\
        X_n \ar[r, "s_k"'] & X_{n+1},
    \end{tikzcd} \!\!\!
    \begin{tikzcd}
        X_{n+2} \ar[d, "d_\top"] 
          & X_{n+3} \ar[l, "d_{k+1}"']
                    \ar[d, "d_\top"]
                    \dlpbk \\
         X_{n+1}  & X_{n+2} \ar[l, "d_{k+1}"].
    \end{tikzcd}
  \end{center}

Decomposition spaces can be seen as an abstraction of posets. 
The pullback condition is precisely the condition required to obtain a counital coassociative comultiplication on $\Grpd_{/X_1}$, called the \emph{incidence coalgebra}. 
Precisely, the span
\[
    X_1 \xleftarrow{d_1} X_2 \xrightarrow{(d_2,d_0)} X_1 \times X_1,
\]
defines a linear functor, the \emph{comultiplication}:
\begin{align*}
    \Delta\colon  \Grpd_{/X_1} & \to  \Grpd_{/X_1 \times X_1} \\
            (T \xra{t} X_1) & \mapsto (d_2,d_0)_{!} \circ d_1^{*}(t).
\end{align*}
The span 
$X_1 \xleftarrow{s_0} X_0 \xrightarrow{z} 1$
defines a linear functor, the \emph{counit}:
\begin{align*}
    \varepsilon\colon  \Grpd_{/X_1} & \to  \Grpd \\
            (T \xra{t} X_1) & \mapsto z_{!} \circ s_0^{*}(t).
\end{align*}
We obtain a coalgebra $(\Grpd_{/X_1}, \Delta, \varepsilon)$ called the \emph{incidence coalgebra}.

\begin{prop}[{\cite[Proposition 2.3.3]{DK}}, {\cite[Proposition 3.5]{GKT1}}]
    Every Segal space is a decomposition space. In particular, the nerve of a poset is a decomposition space.
\end{prop}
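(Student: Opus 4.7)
The plan is to reduce each of the four pullback squares in the definition of a decomposition space to a componentwise pullback of identity squares, using the stronger form of the Segal condition: for every splitting $n = a + b$, the pair of inert maps $[a] \hookrightarrow [n]$ and $[b] \hookrightarrow [n]$ induces an equivalence $X_n \simeq X_a \times_{X_0} X_b$. Under these equivalences, $d_\bot$ becomes a second projection and $d_\top$ a first projection, and the internal face and degeneracy maps act on only one of the two tensor factors.

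For the two squares involving $d_\bot$, I would use the splittings $X_{n+2} \simeq X_1 \times_{X_0} X_{n+1}$ and $X_{n+3} \simeq X_1 \times_{X_0} X_{n+2}$. Since $s_{k+1}$ duplicates vertex $k+1 \geq 1$ and $d_{k+2}$ removes vertex $k+2 \geq 2$, both maps leave the initial edge untouched, and hence correspond respectively to $\mathrm{id}_{X_1} \times s_k$ and $\mathrm{id}_{X_1} \times d_{k+1}$. Each square becomes the second-projection square of a fibre product over $X_0$, which is manifestly a pullback. Alternatively, this can be checked via the fibre criterion (Lemma~\ref{fibreslemma}): over any point of the bottom-left object, the fibres of the two vertical maps are both the same fibre of $X_1 \to X_0$, since the horizontal maps preserve the relevant endpoint.

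The two squares involving $d_\top$ are handled symmetrically, using the dual splittings $X_{n+2} \simeq X_{n+1} \times_{X_0} X_1$ and $X_{n+3} \simeq X_{n+2} \times_{X_0} X_1$. For $0 \leq k \leq n$, the maps $s_k$ and $d_{k+1}$ act on positions $\leq n+1$, hence on the first factor, and correspond to $s_k \times \mathrm{id}_{X_1}$ and $d_{k+1} \times \mathrm{id}_{X_1}$; each square then becomes a trivial componentwise pullback, possibly with an application of the prism lemma (Lemma~\ref{prismlemma}) to paste the identifications together.

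The main obstacle is purely combinatorial bookkeeping: tracking which simplicial operator acts on which factor under the correct Segal equivalence, and verifying the index ranges $0 \leq k \leq n$ align with the splitting used. Once this is done, every pullback verification reduces to the associativity of fibre products and the trivial fact that the second-projection square of a fibre product is a pullback. The second assertion is then immediate: the nerve of any poset (or any category) is a Segal simplicial set, viewed as a discrete simplicial groupoid, and hence a decomposition space by the first statement.
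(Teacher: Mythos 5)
The paper gives no proof of this proposition, citing it from Dyckerhoff--Kapranov and G\'alvez-Carrillo--Kock--Tonks; your argument is correct and is essentially the standard proof found in those references. Reducing each of the four squares to the basic Segal pullbacks $X_{m}\simeq X_1\times_{X_0}X_{m-1}$ (resp.\ $X_{m-1}\times_{X_0}X_1$), checking via the index ranges that the horizontal maps fix the extremal edge, and concluding with the fibre criterion of Lemma~\ref{fibreslemma} (or the prism lemma) is exactly the intended mechanism, with only the routine homotopy-coherence bookkeeping you already flag left implicit.
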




\paragraph{Homotopy cardinality \cite{GKT:combinatorics}}

In order to be able to take homotopy cardinality to get a coalgebra at the numerical level, we need to assume some finiteness conditions.
A groupoid is \emph{locally finite} if the automorphism groups are finite, and is \emph{finite} if furthermore it has finitely many components. We denote by $\grpd$ the category of finite groupoids.
Note that every set is locally finite.
A map of groupoids is \emph{finite} if each fibre is finite.

The \emph{homotopy cardinality} of a finite groupoid $X$ is defined to be 
\[
    |X| = \sum_{x\in \pi_0 X} \frac{1}{|\text{Aut}(x)|} \in \Q,
\]
and the homotopy cardinality of a finite map of groupoid is
\[
    |X\to S| = \sum_{s\in \pi_0 S} \frac{|X_s|}{|\text{Aut}(s)|} \delta_s \in \Q_{\pi_0 S},
\]
where $X_s$ is the homotopy fibre,
and $\Q_{\pi_0 S}$ is the vector space spanned by iso-classes, denoted by the formal symbol $\delta_s$, for $s \in \pi_0 S$.
Homotopy equivalent groupoids have the same cardinality.
For a locally finite groupoid, there is a notion of cardinality $|\thg|\colon \Grpd_{/S} \to \Q _{\pi_0 S}$ sending a basis element $\name{s}$ to the basis element $\delta_s = |\name{s}|$.

A decomposition space $X$ is \emph{locally finite} if $X_1$ is a locally finite groupoid, and the two maps $s_0$ and $d_1$ have finite fibres. 
 If we require furthermore the decomposition space to be \emph{locally discrete}, that is the fibres of $s_0$ and $d_1$ are discrete groupoids, then the comultiplication formula will be free of denominators, see \cite{GKT:combinatorics}.

For any locally finite decomposition space $X$, we can take the cardinality of the linear functors $\Delta$ and $\varepsilon$ to obtain a coalgebra structure $(\Q_{\pi_0{X_1}}, |\Delta|, |\varepsilon|)$, called the \emph{numerical incidence coalgebra} of $X$.

Under the completeness condition ($s_0$ is a monomorphism) and the locally finite length condition (each edge has a finite length), there is a general Möbius inversion principle, see \cite{GKT2}.

\paragraph{Culf maps}
A map $f\colon X \to Y$ of simplicial spaces is \emph{cartesian} on an arrow \mbox{$[n] \to [k]$} in $\simplexcat$ if the naturality square for $f$ with respect to this arrow is a pullback.

A simplicial map $f\colon X \to Y$ is \emph{culf} if it is cartesian on all active maps.
The culf functors induce coalgebra homomorphisms between the incidence coalgebras \cite[Lemma 8.2]{GKT1}. 

\begin{prop}[{\cite[Lemma 4.6]{GKT1}}]\label{culfoverds}
    If $X$ is a decomposition space and $f\colon Y \to X$ is a culf map, then also $Y$ is a decomposition space.
\end{prop}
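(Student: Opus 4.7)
The plan is to verify that $Y$ sends each of the four generating active-inert pushout squares displayed in the excerpt to a pullback of groupoids. Fix such a square; in every case the horizontal maps are degeneracies or inner face maps (hence active) while the vertical maps are outer face maps $d_\bot$ or $d_\top$ (hence inert). Thus, the naturality squares of $f$ at the two active sides of the square are pullbacks by the culf hypothesis on $f$, and the corresponding square for $X$ is a pullback because $X$ is a decomposition space.

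I then assemble these data into a cube whose top face is the $Y$-square, whose bottom face is the $X$-square, and two of whose opposing side faces are the culf-pullbacks. Labelling the corners as $Y_A, Y_B, Y_C, Y_D$ with the span in question $Y_B \to Y_C \leftarrow Y_D$ and corner-of-interest $Y_A$, the two naturality pullbacks yield
\[
Y_A \simeq X_A \times_{X_B} Y_B, \qquad Y_D \simeq X_D \times_{X_C} Y_C.
\]
Substituting the decomposition-space identification $X_A \simeq X_B \times_{X_C} X_D$ into the first equivalence and invoking the prism lemma (Lemma~\ref{prismlemma}) to cancel the common factor $X_B$ gives $Y_A \simeq X_D \times_{X_C} Y_B$. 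Dually, expanding $Y_D$ in $Y_B \times_{Y_C} Y_D$ via the second naturality pullback and applying the prism lemma once more produces $Y_B \times_{Y_C} Y_D \simeq X_D \times_{X_C} Y_B$. Both resulting equivalences are induced by $f$ and the structure maps of the simplicial objects, so they agree on the nose, yielding $Y_A \simeq Y_B \times_{Y_C} Y_D$ via the canonical comparison map.

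The main obstacle is purely bookkeeping: tracking the orientation of active versus inert arrows after applying the contravariant functors, and checking that the abstract equivalence produced by pasting pullbacks coincides with the canonical comparison map of the $Y$-square. Alternatively, one may use Lemma~\ref{fibreslemma} and reduce to an identification of fibres over each point of $Y_C$; the culf hypothesis on the vertical active maps identifies these fibres with the corresponding fibres in the $X$-square, which is a pullback, concluding the argument.
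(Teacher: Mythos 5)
The paper itself gives no proof of this statement: it is quoted verbatim from \cite[Lemma 4.6]{GKT1}. Your argument is correct and is essentially the proof given there: stack the $Y$-square on the $X$-square to form a prism/cube, use culfness to see that the two faces over the active arrows are pullbacks, and conclude with Lemma~\ref{prismlemma}. Two small remarks. First, your final step can be streamlined: once you know the rectangle obtained by pasting the $Y$-square on top of the culf naturality square for $Y_D\to Y_C$ is a pullback (it is the same rectangle, up to the naturality $2$-cells of $f$, as the one you computed to be $X_D\times_{X_C}Y_B$), a single further application of Lemma~\ref{prismlemma} gives that the $Y$-square is a pullback, with no need to argue separately that two abstract equivalences agree with the canonical comparison map. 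Second, in your sketched alternative the orientation is slightly off: to use Lemma~\ref{fibreslemma} so that culfness applies, one should fibre the square along the \emph{active} legs $Y_A\to Y_B$ and $Y_D\to Y_C$ and index over points of $Y_B$ (not $Y_C$); culfness then identifies these fibres with the fibres of $X_A\to X_B$ and $X_D\to X_C$, and the $X$-square being a pullback finishes the argument.
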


\begin{prop}[{\cite{GKT:restrict}}]\label{propCULF}
    If $X$ is a locally discrete (resp. locally finite length) decomposition space and $f\colon Y \to X$ is a culf map, then also $Y$ is a locally discrete (resp. locally finite length) decomposition space.
    This is also the case for locally finite, but we must check moreover that $Y_1$ is locally finite.
\end{prop}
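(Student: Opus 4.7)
The plan is to exploit the fact that all structural maps governing the three local properties are induced by \emph{active} simplicial operators, so the culf hypothesis pulls everything back from $X$ to $Y$. First I would observe that $s_0\colon X_0 \to X_1$, $d_1\colon X_2 \to X_1$, and more generally the ``long-edge'' maps $X_k \to X_1$ (induced by the unique endpoint-preserving map $[1] \to [k]$) are all active. Since $f\colon Y \to X$ is culf, the naturality square
\begin{center}
\begin{tikzcd}
Y_n \ar[r] \ar[d, "f_n"'] \drpbk & Y_1 \ar[d, "f_1"] \\
X_n \ar[r] & X_1
\end{tikzcd}
\end{center}
for each of these maps is a pullback, and by Lemma~\ref{fibreslemma} the fibre of $Y_n \to Y_1$ over any $b\in Y_1$ is then equivalent to the fibre of the corresponding $X_n \to X_1$ over $f_1(b)$.

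For local discreteness the transfer is immediate: the fibres of $s_0$ and $d_1$ on the $Y$-side are equivalent to those on the $X$-side, hence discrete. The same argument handles the fibre-finiteness half of local finiteness; combined with Proposition~\ref{culfoverds} (which makes $Y$ a decomposition space) this covers all structural checks. What does \emph{not} transfer is local finiteness of $Y_1$ as a groupoid — automorphism groups in $Y_1$ are in no way constrained by those in $X_1$ — which is exactly why the statement includes the extra hypothesis that $Y_1$ be locally finite.

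For locally finite length I would measure the length of an edge $b\in Y_1$ as the supremum of $k$ admitting a non-degenerate $k$-simplex with long edge $b$. The pullback of long-edge maps puts $k$-simplices over $b$ in bijection with $k$-simplices over $f_1(b)$; since $f$ is also cartesian on all degeneracies $s_i$ (which are likewise active), a simplex of $Y$ is degenerate if and only if its image in $X$ is, so non-degeneracy is both preserved and reflected. Consequently the length of $b$ equals the length of $f_1(b)$, which is finite by hypothesis.

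The main obstacle I expect is the degeneracy bookkeeping in the last paragraph: one needs to unpack that culf not only pulls back the long-edge maps but also reflects and preserves non-degeneracy cleanly, so the two notions of length truly agree under $f$. Everything else is a routine application of Lemma~\ref{fibreslemma} together with the observation that each operator at stake is active.
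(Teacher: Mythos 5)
The paper does not actually prove this proposition: it is imported verbatim from \cite{GKT:restrict} with only the citation, so there is no in-house argument to compare against. Your reconstruction is correct and is essentially the argument of the cited reference: since $s_0$, $d_1$, the long-edge maps $X_k\to X_1$, and all degeneracies are active, culfness makes the corresponding naturality squares pullbacks, so by Lemma~\ref{fibreslemma} the relevant fibres over $b\in Y_1$ agree with those over $f_1(b)$, transferring discreteness, finiteness of fibres, and (via the reflection of degeneracy you note) length, while nothing constrains the automorphism groups of $Y_1$ itself --- which is exactly the caveat in the statement.
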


\paragraph{Monoidal decomposition spaces, bialgebras}
A \emph{monoidal structure}~\cite[\S 9]{GKT1} on a decomposition space $X$ is given by the data of simplicial maps $\eta\colon 1 \to X$ and $\otimes\colon X \times X \to X$ required to be culf, and satisfying the standard associative and unital laws of monoidal structures. 
This monoidal structure induces a monoid structure on the incidence coalgebra $\Grpd_{/X_1}$ and altogether a bialgebra structure.
In combinatorics, the monoidal structure is often given by disjoint union.


\section{Hereditary species}\label{sec:hersp}

Let $\B$ denote the category of finite sets and bijections, 
let $\I$ denote the category of finite sets and injective maps, and let $\Sur$ denote the category of finite sets and surjective maps.
We denote by $\Sp$ the category of finite sets and partially defined surjections. A partially defined surjection $V \to W$ consists of a subset $U \subset V$ and a surjection $U \to W$. More formally, the arrows in $\Sp$ are given by equivalence classes of spans
\begin{center}
    \begin{tikzcd}
          & U \ar[dl, tail, "i"'] \ar[dr, two heads, "p"] &  \\ 
        V & & W
    \end{tikzcd}    
\end{center}
where $i$ is injective and $p$ is surjective, and where two such spans are equivalent if they are isomorphic as spans. Partially defined surjections are composed by pullback composition of spans (in the category of sets). This is meaningful since both injections and surjections are stable under pullbacks in the category of sets. Note that the empty set is included here. Note also that the category $\Sp$ contains the category $\Sur$ as a subcategory (the spans in which the injection leg is an identity map) and also contains the category $\I\op$ as a subcategory (the spans in which the surjection leg is an identity map).

\paragraph{Species}

Recall that a \emph{species}~\cite{Joyal:species} is a functor $F\colon\B\to\Set$ , $V \mapsto F[V]$. An element of $F[V]$ is called an $F$-structure on the finite set $V$. A \emph{restriction species}~\cite{Schmitt} is a functor $R\colon\I\op\to\Set$. An $R$-structure on a set $V$ thus restricts to any subset $U \subset V$. Schmitt~\cite{Schmitt} further defines a \emph{hereditary species} to be a functor $H\colon \Sp\to \Set$.
An element $G\in H[V]$ is called a \emph{$H$-structure} on the set $V$. A hereditary species is thus covariantly functorial (not only in bijections but also) in surjections, and also contravariantly functorial in injections (that is, is a restriction species). This means that a $H$-structure on a set $V$ induces also a $H$-structure on any quotient set and on any subset. Furthermore, these functorialities are compatible in the sense that for any pullback square
\begin{center}
    \begin{tikzcd}
    U \ar[d, "p"', twoheadrightarrow]
    & U' \ar[l, "i"', rightarrowtail] \ar[d, "p'", twoheadrightarrow] \dlpbk\\
    V & V' \ar[l, "j", rightarrowtail]
    \end{tikzcd}
\end{center}
we have
\[
H[p']\circ H[i] = H[j] \circ H[p].
\]
This `Beck-Chevalley' law is a consequence of the fact that $H$
must respect the composition of spans.

If $\pi$ is a partition of $V$, and $\rho_{V,\pi}\colon V \to \pi$ is the canonical surjection, the \emph{quotient} $G/\pi$ is the $H$-structure on the set $\pi$ defined by 
\[G/\pi = H[\rho_{V,\pi}](G).\]
The \emph{restriction} $G|\pi$ is defined to be the family 
\[G|\pi = \{ G|B \}_{B \in \pi}.\]
A morphism of hereditary species is a natural transformation of functors. We denote by $\kat{HSp}$ the category of hereditary species and natural transformations.

\begin{ex}\label{graphs}
   For a graph $G$ with vertex set $V$, and $\pi$ a partition of $V$, we define $G|\pi$ to be the family of graphs whose vertex sets are blocks of $\pi$ and with an edge between two elements of the same block if there is an edge in $G$ with both incident vertices in the block.
We define $G/\pi$ to be the graph with vertex set $\pi$ and with an edge between two vertices if there is a edge in $G$ between the corresponding blocks.
\end{ex}

Suppose that $\tau$ is a finer partition than $\sigma$ (denoted $\tau \le \sigma$), that is each block of $\sigma$ is a union of blocks of $\tau$. We denote $\sigma/\tau$ the partition of the set $\tau$ induced by $\sigma$.

The following proposition is a consequence of the functorialities in surjections and injections, and the Beck-Chevalley law.

\begin{prop}[{\cite[Proposition 4.1]{Schmitt}}]\label{schmittidentities}
If $\tau$ and $\sigma$ are partitions of $V$ such that $\tau \le \sigma$, 
then the following identities hold:
    \begin{align*} 
        [(G|\sigma)|\tau] &= [G|\tau], \\
        [(G/\tau)|(\sigma/\tau)] &= [(G|\sigma)/\tau], \\
        [(G/\tau)/(\sigma/\tau)] &= [G/\sigma],
    \end{align*}
    where $[G]$ is the isomorphism class of $G$.
\end{prop}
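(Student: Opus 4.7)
The plan is to verify each identity by unpacking the definitions of restriction and quotient and then invoking, respectively, injection-functoriality, Beck--Chevalley compatibility, and surjection-functoriality of the hereditary species $H$.

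First I would dispatch identity (1). Since $\tau \le \sigma$, each block $C \in \tau$ is contained in a unique block $B_C \in \sigma$, so $(G|\sigma)|\tau$ is the family indexed by $C \in \tau$ with $C$-th entry $(G|B_C)|C$. Writing $j_{B_C}\colon B_C \hookrightarrow V$, $j_C'\colon C \hookrightarrow B_C$, and $j_C = j_{B_C}\circ j_C'$, contravariant functoriality of $H$ on injections gives
\[ (G|B_C)|C = H[j_C'](H[j_{B_C}](G)) = H[j_C](G) = G|C, \]
so the two families agree entrywise.

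Identity (2) is the heart of the argument, and I would handle it by pinpointing the correct pullback square. Fix $B \in \sigma$ and let $\tau_B \subseteq \tau$ be the set of blocks of $\tau$ contained in $B$, which is precisely the block of $\sigma/\tau$ associated to $B$. The square
\[
\begin{tikzcd}
V \ar[d, "\rho_{V,\tau}"', twoheadrightarrow]
& B \ar[l, "j_B"', rightarrowtail] \ar[d, "\rho_B", twoheadrightarrow] \dlpbk \\
\tau & \tau_B \ar[l, "j_{\tau_B}", rightarrowtail]
\end{tikzcd}
\]
is a pullback of finite sets, since the preimage under $\rho_{V,\tau}$ of $\tau_B \subseteq \tau$ equals $\bigcup_{C\in\tau_B}C = B$. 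Applying the Beck--Chevalley law for $H$ to this square and evaluating at $G$ yields
\[ (G|B)/\tau_B \;=\; H[\rho_B](H[j_B](G)) \;=\; H[j_{\tau_B}](H[\rho_{V,\tau}](G)) \;=\; (G/\tau)|\tau_B, \]
which are the $B$-entries of $(G|\sigma)/\tau$ and $(G/\tau)|(\sigma/\tau)$ respectively.

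For identity (3), I would observe that the canonical bijection $\sigma/\tau \cong \sigma$ sending $\tau_B \mapsto B$ identifies the composite surjection $V \xrightarrow{\rho_{V,\tau}} \tau \xrightarrow{\rho_{\tau,\sigma/\tau}} \sigma/\tau$ with $\rho_{V,\sigma}\colon V \twoheadrightarrow \sigma$; covariant functoriality of $H$ on surjections then gives $(G/\tau)/(\sigma/\tau) \cong G/\sigma$. The only mild subtlety running through the argument is that the bijections $\sigma/\tau \cong \sigma$, and of $\tau_B$ with the corresponding block of $\sigma/\tau$, are canonical but not strict equalities of underlying sets, which is precisely why the identities are stated at the level of isomorphism classes. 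Apart from recognising the pullback square in step (2), the verification is pure bookkeeping and poses no real obstacle.
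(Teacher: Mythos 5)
Your proof is correct and follows exactly the route the paper indicates: the paper cites this result from Schmitt and only remarks that it is ``a consequence of the functorialities in surjections and injections, and the Beck--Chevalley law'', which is precisely your three-step verification (contravariant functoriality in injections for the first identity, Beck--Chevalley on the pullback square $B = \rho_{V,\tau}^{-1}(\tau_B)$ for the second, covariant functoriality in surjections plus the canonical bijection $\sigma/\tau \cong \sigma$ for the third). Your closing remark about why the identities only hold up to isomorphism is also the right observation.
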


\paragraph{Coalgebra}\label{hercoalgebra}
To any hereditary species, Schmitt associates a coalgebra $B$ on the vector space spanned by all isomorphism classes of families of \emph{non-empty} $H$-structures. The comultiplication of a $H$-structure $G$ on the set $V$ is defined by:
 \[ \Delta(G) = \sum_{\sigma \in \Pi(V)} G|\sigma \otimes G/\sigma.\]
The counit is defined by
 \[
 \varepsilon(G) = 
 \begin{cases}
     1 &\text{ if every member in the family is a singleton,}\\
     0 &\text{ otherwise.}
 \end{cases}
 \]

Note the importance of disallowing empty structures.
With the empty $H$-structure we would have $\Delta(\emptyset) = ( \,) \otimes \emptyset$, and the comultiplication would not be counital on the right. While not counital on the right, we shall see later that it is still a left comodule over $B$.

\paragraph{Hereditary species of non-empty sets}
We consider the hereditary species of non-empty finite sets. 
The comultiplication of a finite set $V$ is defined by summing over all partitions of $V$ and putting on the left the family of blocks of the partition and on the right the set whose elements are blocks of the partition:
 \[ 
    \Delta(V) = \sum_{\pi \in \Pi(V)} (V_1, \dots, V_k) \otimes \pi.
\]

\section{A decomposition space of surjections}
\label{sec:dssurj}

We work with groupoids instead of sets, to take into account  symmetries (see \cite{BD} and \cite{GKT:combinatorics})
and define a \emph{hereditary species} to be a functor 
\[H\colon \Sp \to \Grpd.\]
In particular, a hereditary species is (covariantly) functorial in surjections and contravariantly functorial in injections, and these two functorialities interact via the Beck-Chevalley rule.

\paragraph{Partitions and surjections}
The groupoid of partitions (whose objects are sets with a partition into blocks, and arrows are bijections between sets preserving blocks) is equivalent to the groupoid of surjections (arrows are pairs of compatible bijections).
More precisely, a partition of a set $V$ can be given by a surjection $V \surj P$: a block of the partition of $V$ is the preimage of an element of $P$.
Refinement of partitions is rendered conveniently in terms of composition of surjections. Precisely, the poset of partitions
of $V$ under refinement is equivalent to the coslice $\Sur_{V/}$: to say that $\rho \le \pi$ is precisely to say that there is a commutative triangle of the corresponding surjections 
\begin{center}
    \begin{tikzcd}[column sep=small]
       & V \ar[dr, two heads,""] 
           \ar[dl, two heads,""']  & \\
        P \ar[rr, two heads, ""] & & Q.
    \end{tikzcd}
\end{center}
More generally, given $n$ composable surjections from $V$, we get $n$ partitions of $V$.
A pair of composable surjections $V \stackrel{f}{\surj} P \stackrel{g}{\surj} Q$ induces surjective maps between the fibres of $gf$ and $g$ over each element of $Q$.

\paragraph{Fat nerve of the category of finite sets and surjections}

The \emph{fat nerve}~\cite{GKT1}\index{nerve!fat} of $\Sur$ is the simplicial groupoid
\begin{align*}
    \fatnerve \Sur\colon \simplexcat\op &\to \Grpd \\
                         [n] & \mapsto \Map([n], \Sur).
\end{align*}
Explicitly, $(\fatnerve \Sur)_0$ is the groupoid of finite sets and bijections, $(\fatnerve \Sur)_1$ is the groupoid whose objects are surjections and maps consist of a bijection between the sources, and a compatible bijection between the targets. The objects of the groupoid $(\fatnerve \Sur)_n$ are $n$ composable surjections, and maps are $(n{+}1)$-uplets of compatible bijections.
The inner face maps are given by composition, the outer face maps by forgetting the first, or the last set in the chain.
The degeneracy maps are given by inserting identity maps.

The skeleton $\Sur_{\text{ord}}$ of $\Sur$ consisting of ordinal numbers and surjections is a full subcategory of $\Sur$, and $\fatnerve \Sur_{\text{ord}} \eq  \fatnerve \Sur$.

\paragraph{Symmetric monoidal category monad}
The symmetric monoidal category monad $\Smonad\colon \Grpd \to \Grpd$~\cite[\S 2.5]{KW} is the monad represented by the polynomial
  \[
  1\leftarrow \B' \to \B \to 1
  \]
  where $\B$ is the groupoid of finite ordinals and bijections (not required to be monotone), and $\B'$ is the groupoid of finite pointed ordinals and basepoint-preserving bijections. 
It sends a groupoid $X$ to
  \[
  \int^{n\in \B} \Map(\B'_n,X) \simeq \int^{n\in \B} X^{\underline n},
  \]
  where $\underline n$ denotes the fibre over $n$, and the integral sign is a homotopy sum~\cite{GKT:HLA}:
  \[
      \int^{k} X = \sum_k \frac{X}{\Aut k}.
  \]
Given a groupoid $X$, on objects $\Smonad X$ is the groupoid whose objects are finite lists of objects of $X$, and a morphism $ (a_1, \dots, a_n) \to (b_1, \dots , b_m)$ consists of a bijection $\underline{n} \to \underline{m}$ and morphisms $a_i \to b_{\sigma(i)}$ in $X$.
The algebras over $\Smonad$ are symmetric monoidal categories. The unit sends an element $l$ to the list with one element $(l)$, and the multiplication concatenates the lists.

\paragraph{Simplicial groupoid of surjections}

Consider the hereditary species of non-empty sets. Associated to it we construct a simplicial groupoid $\ds S$. Later this simplicial groupoid $\ds S$ will be the base ingredient in the construction of a simplicial groupoid $\ds H$ associated to each hereditary species $H$. The simplicial groupoid $\ds H$ will be a monoidal decomposition space, and therefore define a bialgebra, which will be shown to be the Schmitt bialgebra construction. The basis elements of this bialgebra (i.e.~the objects of the groupoid $\ds H_1$), will be \emph{families} of non-empty $H$-structures, not individual non-empty $H$-structures. Similarly for $\ds S$, the basis elements, the objects of $\ds S_1$, will be \emph{families} of non-empty finite sets, not just individual non-empty finite sets.
Including families rather than just individual structures is necessary in order to have a well-defined comultiplication, since the left-hand tensor factor will be a monomial rather  than a linear factor, as explained in the introduction.  At the same time, working with families gives immediately the algebra  structure (which is free).  Nevertheless, it will be technically important to consider also individual structures, which we  regard as \emph{connected} families.

Accordingly, we first describe a 
simplicial-groupoid-with-missing-top-face-maps, which we call 
$\ds C$ for `connected'.
We first consider the groupoid $\ds{C}_j$ of $(j{-}1)$ composable surjections between non-empty finite sets. The objects of $\ds{C}_2$ are surjections, the objects of $\ds{C}_1$ are non-empty finite sets, and $\ds{C}_0$ is the terminal groupoid, that is equivalent to a point.
Face maps are given by:
\begin{itemize}
    \item $d_0$ forgets the first set in the chain of surjections;
    \item $d_i\colon \ds{C}_j \to \ds{C}_{j-1}$ compose the $i$th and $(i{+}1)$st surjection, for $0 < i < j-1$;
    \item $d_{j-1}$ forgets the last set in the chain of surjections.
\end{itemize}
The degeneracy maps $s_i\colon \ds{C}_j \to \ds{C}_{j+1}$, for $0 \le i \le j-1$ are given by inserting an identity arrow at object number $i$. 
The degeneracy map $s_\top\colon \ds{C}_j \to \ds{C}_{j+1}$ is given by appending with the map whose target is the terminal set $1$.

\begin{rmk}
    The map to $1$ is a surjection since the sets were required non-empty. With possibly empty sets, it would not be possible to define the top degeneracy map.
\end{rmk}

To obtain top face maps, it is necessary to introduce families: the top face map of a surjections chain must be the family of surjections chain shorter by one, obtained as the fibre over each element in the last set. 
We define $\ds S$ to be the symmetric monoidal category monad $\Smonad$ applied to $\ds C$.  All the face maps (except the  missing top ones) and all the degeneracy maps are just $\Smonad$ applied to the face and degeneracy maps of $\ds C$.
The missing top face map $d_\top$ is now given by fibres: given $(j{-}1)$ composable surjections, for each element $k$ of the last target set, we can form the fibres over $k$ of the different source sets.  It also induces surjective maps between these different fibres.
We end up with a family (indexed by elements of the last target set) of $(j{-}2)$ composable surjections between the fibres.
Note that the fibres are non-empty since we only consider surjections, not arbitrary maps.

\begin{rmk}
    Simplicially, the multi aspect is localised to the top face maps. This is already a feature well known from operads~\cite{KW}: the `domain' (given by the simplicial map $d_1$) of a single operation of an operad is not a single object but a family of objects. In the present situation, beyond operads, a new feature is that the top face maps do not satisfy the simplicial identities on the nose, only up to coherent homotopy, making altogether the simplicial groupoids pseudosimplicial (as already allowed for in the homotopy setting in which the theory of decomposition spaces is staged).  This is caused by symmetries acting on blocks of partitions, and hence on factors in monomials, and seems to be an unavoidable nuisance, except in fully rigid situations such as $L$-species with monotone surjections. 
\end{rmk}

\begin{prop}
    The groupoids $\ds{S}_j$ and the degeneracy and face maps given above form a pseudosimplicial groupoid $\ds{S}$.
\end{prop}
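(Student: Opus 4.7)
The plan is to separate the verification into two parts: the simplicial identities in which no top face map $d_\top$ appears, which hold on the nose, and those in which $d_\top$ appears, which hold only up to canonical invertible $2$-cells.

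For the first part, I would begin by verifying that $\ds{C}$ itself satisfies all the simplicial identities that can be stated in the absence of top face maps. The inner $d_i$'s are given by composition of consecutive surjections, $d_0$ forgets the first set, the $s_i$'s for $i \le j-1$ insert an identity surjection, and $s_\top$ appends the unique surjection to the terminal set $1$. These data satisfy the relevant identities strictly, by associativity of composition of surjections, the unit laws, and terminality of $1$. Since $\Smonad \colon \Grpd \to \Grpd$ is a $2$-functor, applying it preserves these strict identities, and hence the face and degeneracy maps of $\ds{S} = \Smonad \ds{C}$ coming from $\Smonad$-ation satisfy the same identities in $\ds{S}$.

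For the second part, I would enumerate the simplicial identities involving $d_\top$, schematically $d_i \circ d_\top \simeq d_\top \circ d_{i'}$ for appropriate indices (including the case $d_\top \circ d_\top$), $d_\top \circ s_i \simeq s_{i'} \circ d_\top$ for $i < j-1$, and $d_\top \circ s_\top \simeq \mathrm{id}$. Each of these compares two ways of forming a family of fibres, and the required equivalence follows from the universal property of (homotopy) pullbacks in $\Grpd$. For instance, given a chain $X_0 \surj X_1 \surj \cdots \surj X_{j-1}$, the composite $d_i \circ d_\top$ first takes fibres over elements of $X_{j-1}$ and then composes surjection number $i$ with its successor in each fibre, whereas $d_\top \circ d_{i+1}$ first composes and then takes fibres; these produce naturally equivalent families by the pasting lemma for pullbacks (Lemma~\ref{prismlemma}), using that surjections are stable under pullback. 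The identity $d_\top \circ s_\top \simeq \mathrm{id}$ is witnessed by the canonical identification of the fibre of $X_{j-1} \to 1$ with $X_{j-1}$, recovering the original chain as a one-element family, and the remaining identities are handled analogously.

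The main obstacle will be the coherence of the $2$-cells assembling into a pseudosimplicial structure. Because $d_\top$ produces families indexed by the disjoint union of the target sets across the outer $\Smonad$-family, different orders of operations yield families that differ by a canonical bijection in the groupoid $\B$ of finite sets. Coherence of all the required diagrams of such bijections follows from the $2$-naturality of $\Smonad$ together with the coherence of pullback pasting in $\Grpd$; this symmetry-induced reindexing is precisely the source of the genuinely pseudo (as opposed to strict) character flagged in the preceding remark. Once these canonical $2$-cells are produced, checking the pseudosimplicial coherence conditions reduces to a routine diagram chase at the level of fibres and $\Smonad$-indexing bijections.
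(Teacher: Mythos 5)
Your overall strategy---pushing the strict identities through the $2$-functor $\Smonad$ applied to $\ds{C}$, and isolating the identities involving $d_\top$ for separate treatment---matches the paper's. But there are two points where your account diverges from what is actually true, and one of them hides the real content of the proof. First, you assert that \emph{all} identities involving $d_\top$ hold only up to invertible $2$-cells. In fact the paper observes that $d_i\circ d_\top = d_\top\circ d_i$, $s_i\circ d_\top = d_\top\circ s_i$, $d_\top\circ s_\top=\id$, $d_\top\circ d_\bot = d_\bot\circ d_\top$, etc., all hold \emph{strictly}: the fibre of a composite surjection over an element is literally the same subset of the source set whether one composes first or takes fibres first, so no reindexing occurs. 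The single genuinely pseudo identity is $d_\top\circ d_\top \eq d_\top\circ d_{\top-1}$, where one side produces the family $\{V_p\}_{p\in P}$ and the other the flattened family of families $\{\{V_p\}_{p\in P_q}\}_{q\in Q}$, indexed by $\sum_{q}P_q$ rather than by $P$. By treating every $d_\top$-identity as pseudo you inflate the coherence problem far beyond what is necessary, and your appeal to ``$2$-naturality of $\Smonad$ together with the coherence of pullback pasting'' then has to carry much more weight than it can.

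Second, and more seriously, the one coherence check that genuinely must be done is only gestured at in your proposal. The paper proves commutativity of the square relating $(V_q)_p$, $V_p$, and their iterated-fibre variants by a concrete device: each of the four canonical isomorphisms is shown (via the universal property of iterated pullbacks) to be compatible with the injections of the fibres into $V$, and since each $V_i \to V$ is a \emph{monomorphism}, compatibility over $V$ forces the square to commute. Your ``routine diagram chase at the level of fibres and $\Smonad$-indexing bijections'' is precisely the step that is not routine without this observation; as written, you have produced candidate $2$-cells but given no argument that they satisfy the pseudosimplicial coherence condition. Supplying the monomorphism argument (or an equivalent rigidification) would close the gap.
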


\begin{proof}
The only pseudosimplicial identity is
$d_\top \circ d_\top \eq d_\top \circ d_{\top-1}$.
The other simplicial identities are strict and straightforward to check. The ones involving top face and top degeneracy maps are:
\begin{align*}
    d_i \circ d_\top &= d_\top \circ d_{i} &
 	s_i \circ d_\top &= d_\top \circ s_{i} &
	d_\top \circ s_\top &= \id \\
	d_\top \circ d_\bot &= d_\bot \circ d_\top &
	d_i \circ s_\top &= s_\top \circ d_i &
	s_\top \circ s_i &= s_i \circ s_\top 
  \end{align*} 

It remains to verify the functor is pseudosimplicial for the top face map $d_\top$.
Given two composable surjections $V \stackrel{f}\surj P \stackrel{g}\surj Q$, we can consider the family $\{V_p\}_{p \in P}$ of fibres of $f$ over elements of $P$, and we can also consider the family of families $\left\{  \{ V_p \}_{p \in P_q} \right\}_{q\in Q}$.
There is a canonical isomorphism $\left\{  \{ V_p \}_{p \in P_q} \right\}_{q\in Q} \to \{V_p\}_{p \in P}$.
We want to show the following square is commutative:
\begin{center}
    \begin{tikzcd}
        ((V_k)_l)_m \ar[r, ""] 
           \ar[d, ""'] & (V_k)_l \ar[d, ""]\\
        (V_k)_p \ar[r, ""'] & V_p.
    \end{tikzcd}
\end{center}
The isomorphisms are compatible with the injections of fibres into $V$, by the universal property of pullback:
given $p \in P$ such that $g(p)=q$, the following square is a pullback by definition:
\begin{center}
    \begin{tikzcd}
        V_p \ar[r, ""] 
           \ar[d, ""'] & V \ar[d, ""]\\
        1 \ar[r, "\name{p}"'] & P.
    \end{tikzcd}
\end{center}
We also have the following commutative diagram, given by pullbacks
\begin{center}
    \begin{tikzcd}
      (V_q)_p \ar[r]\ar[d]\ & V_q \ar[r, ""] 
         \ar[d, ""'] & V \ar[d, ""]\\
     1 \ar[r, "\name{p}"'] &  P_q \ar[r, ""']\ar[d]\ & P \ar[d]\\
     & 1 \ar[r, "\name{q}"'] & Q.
    \end{tikzcd}
\end{center}
Hence by the universal property of pullbacks, there exists a unique isomorphism $(V_q)_p \to V_p$ such that the following triangle commutes:
\begin{center}
    \begin{tikzcd}
        (V_q)_p \ar[dr, ] 
           \ar[d, "\eq"'] & \\
         V_p \ar[r, ""'] & V.
    \end{tikzcd}
\end{center}
In a similar way, we obtain the three other isomorphisms between the fibres compatible with the injections into $V$.
Since all four isomorphisms in the square are compatible with the injections into $V$, the commutativity of the square is ensured by the fact that each $V_i \to V$ is a monomorphism.
\end{proof}

Considering only non-empty ordinals $\underline{n}$ instead of all non-empty finite sets, we get an equivalent pseudo\-simplicial groupoid, which we denote $\ds{S}_{\text{ord}}$.
The objects of $(\ds{S}_{\text{ord}})_1$ are lists of non-empty  ordinals, and a map between two lists $(\underline{n}_1, \dots,\underline{n}_p)$ and $(\underline{m}_1, \dots,\underline{m}_{p'})$ consists of a bijection $\sigma\colon p \to p'$ and, for all $i\in p$, a map \mbox{$\rho_i\colon \underline{n}_i \to \underline{m}_{\sigma(i)}$}.

\begin{prop}\label{equivS}
We have a natural equivalence
    $\fatnerve \Sur_\text{ord} \eq \ds{S}_{\text{ord}}$.
\end{prop}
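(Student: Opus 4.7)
The plan is to construct a levelwise equivalence $\phi_n\colon (\fatnerve \Sur_{\text{ord}})_n \xrightarrow{\eq} (\ds{S}_{\text{ord}})_n$ by taking fibres of each $n$-chain over its last set, and then to check that these equivalences assemble into a pseudosimplicial map. At level $n$, given an $n$-chain of surjections $V_0 \twoheadrightarrow \cdots \twoheadrightarrow V_n$ in $\Sur_{\text{ord}}$, I would for each $k\in V_n$ form the fibre chain $V_0|_k \twoheadrightarrow \cdots \twoheadrightarrow V_{n-1}|_k$ of $(n{-}1)$ composable surjections between non-empty ordinals (the fibres are non-empty because the maps are surjective, and inherit an ordinal structure from their ambient ordinals). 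The resulting list indexed by $V_n$ is precisely an object of $\Smonad (\ds{C}_{\text{ord}})_n = (\ds{S}_{\text{ord}})_n$.

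To show $\phi_n$ is an equivalence of groupoids, I would verify essential surjectivity via the reverse construction: from a list $(F_1,\dots,F_p)$ of $(n{-}1)$-chains of non-empty ordinals, form the concatenated chain $\coprod_i (F_i)_0 \twoheadrightarrow \cdots \twoheadrightarrow \coprod_i (F_i)_{n-1} \twoheadrightarrow \underline p$, identifying each disjoint union with the appropriate ordinal. Fully faithfulness would follow from Lemma~\ref{fibreslemma}: a morphism in $(\ds{S}_{\text{ord}})_n$ is a bijection $\underline p \to \underline{p'}$ of indexing sets together with fibrewise morphisms of chains, which is exactly the same data as an $(n{+}1)$-tuple of compatible bijections, i.e.\ a morphism in $(\fatnerve \Sur_{\text{ord}})_n$.

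Next I would verify compatibility with the face and degeneracy maps. Inner faces $d_i$ (composition of consecutive surjections) and lower degeneracies $s_i$ (inserting an identity) commute with $\phi$ strictly, acting fibrewise on both sides, and similarly for $d_0$. The key checks are the top face $d_\top$ and top degeneracy $s_\top$: forgetting $V_n$ in $\fatnerve \Sur_{\text{ord}}$ corresponds under $\phi$ to regrouping the fibre family so that it is indexed by $V_{n-1}$ rather than $V_n$, which is exactly the definition of $d_\top$ on $\ds{S}$; meanwhile, inserting $V_n \xrightarrow{\id} V_n$ corresponds to appending a surjection onto a singleton in each fibre chain, matching the $s_\top$ of $\ds{C}$.

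The main obstacle is the pseudosimplicial coherence. Since $\fatnerve \Sur_{\text{ord}}$ is strictly simplicial while $\ds{S}_{\text{ord}}$ is only pseudosimplicial, $\phi$ transports strict simplicial identities into coherent pseudo-identities; verifying that the associators introduced by these pseudo-identities match the canonical coherences built from iterated pullbacks reduces, as in the proof of the preceding proposition, to the universal property of pullbacks (Lemmas~\ref{prismlemma} and \ref{fibreslemma}).
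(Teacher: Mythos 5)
Your proof is correct and follows essentially the same route as the paper: both construct the forward map by sending a chain of surjections to the family of fibre chains over its last set and the inverse by disjoint union, reducing the equivalence to the fact that a surjection is recovered, up to canonical isomorphism, from its family of fibres. The only quibble is that fully faithfulness is a direct unpacking of morphisms in $\Smonad(\ds{C}_{\text{ord}})_n$ (a bijection of index sets plus fibrewise maps of chains) rather than an application of Lemma~\ref{fibreslemma}, which concerns homotopy pullbacks; your extra attention to the $d_\top$/$s_\top$ compatibility and the pseudosimplicial coherence fills in detail the paper leaves implicit.
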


\begin{proof}
    The functor $\fatnerve \Sur_\text{ord} \to \ds{S}_{\text{ord}}$ sends a surjection $\underline{n} \surj \underline{p}$ to the list of fibres $(\underline{n}_1, \dots,\underline{n}_p)$, and a map $(\underline{n} \xra{\rho} \underline{n}', \underline{p} \xra{\sigma} \underline{p}')$ between two surjections $\underline{n} \surj \underline{p}$ and $\underline{n}' \surj \underline{p}'$ to the map
    $(\underline{n}_1, \dots,\underline{n}_p) \xra{(\sigma, \rho_1, \dots, \rho_p)} (\underline{n}'_1, \dots,\underline{n}'_{p'})$. Note that the surjection $\emptyset \surj \emptyset$ gives the empty list, which is
allowed.
    Similarly, it sends a family of $n$ composable surjections to the list of $(n{-}1)$ composable surjections given by the fibres. The maps are given in a similar way.
    
    There is also a functor $\ds{S}_{\text{ord}} \to \fatnerve \Sur_\text{ord}$. 
    Given a list of non-empty ordinals $(\underline{n}_1, \dots,\underline{n}_p)$, we obtain a surjection $\sum_{i \in \underline{p}} \underline{n}_i \surj \underline{p}$ from the  disjoint union of the elements of the list to the indexing set. (This map is a surjection because all the $n_i$ are non-empty.)
    Given a map between two lists $(\underline{n}_1, \dots,\underline{n}_p)$ and $(\underline{m}_1, \dots,\underline{m}_{p'})$, we obtain a map $\sum_{i \in \underline{p}} \underline{n}_i \to \sum_{i \in \underline{p}'} \underline{m}_i$ by sending $\underline{n}_i$ to $\underline{m}_{\sigma(i)}$.
    Similarly, given a list of $(j{-}1)$ composable surjections, we obtain $j$ composable surjections by disjoint union, and the last one is given as before, using the target sets of the surjections.

It is easy to check they form an equivalence since the disjoint union of fibres of a surjection is isomorphic to the source set of this surjection.
\end{proof}

\begin{prop}\label{SSSegal}
     The pseudosimplicial groupoid $\ds{S}$ is Segal, and hence a decomposition space.
\end{prop}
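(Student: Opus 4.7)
The plan is to bootstrap the Segal condition for $\ds{S}$ from the Segal condition for the fat nerve of the category $\Sur$, using the equivalence already established in Proposition~\ref{equivS}. Concretely, Proposition~\ref{equivS} provides a pseudosimplicial equivalence $\fatnerve \Sur_{\text{ord}} \simeq \ds{S}_{\text{ord}}$, and since passing from ordinals to the full subcategory of non-empty finite sets is a categorical equivalence on both sides, this upgrades to $\fatnerve \Sur \simeq \ds{S}$ as pseudosimplicial groupoids. Because the Segal condition is invariant under pseudosimplicial equivalence, it will suffice to verify that $\fatnerve \Sur$ is Segal.

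For the fat nerve of any $1$-category the Segal condition is standard: for each $n\ge 2$ the Segal map
\[
    \fatnerve \Sur_n \lra \fatnerve \Sur_1 \times^h_{\fatnerve \Sur_0} \fatnerve \Sur_1 \times^h_{\fatnerve \Sur_0} \cdots \times^h_{\fatnerve \Sur_0} \fatnerve \Sur_1
\]
is an equivalence of groupoids. Indeed, an object of the right-hand side is an $n$-tuple of surjections $U_{i-1}\surj U_i$ together with compatible isomorphism data identifying the intermediate endpoints; strictifying along these isomorphisms produces a genuine chain $U_0 \surj U_1 \surj \cdots \surj U_n$, i.e.\ an object of the left-hand side, and this assignment is an equivalence on both isomorphism classes and automorphism groups (both sides compute the groupoid of $n$-chains with componentwise bijections). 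Concluding, $\ds{S}$ is Segal, and hence a decomposition space by the cited proposition of Dyckerhoff–Kapranov and Gálvez-Carrillo–Kock–Tonks.

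The only subtle point is that $\ds{S}$ is merely pseudosimplicial, so the Segal condition must be interpreted in the homotopy-invariant sense — but this is precisely the setting in which the theory of decomposition spaces is developed, and the coherence isomorphisms governing the top face maps of $\ds{S}$ are exactly the iterated-fibre isomorphisms whose compatibility was already verified in the proof of Proposition~\ref{equivS}. Thus no additional coherence work is required beyond what has already been established.
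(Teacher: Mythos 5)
Your proof is correct and follows essentially the same route as the paper: the paper's own argument is exactly to invoke the chain of equivalences $\fatnerve \Sur_{\text{ord}} \eq \ds{S}_{\text{ord}} \eq \ds{S}$ from Proposition~\ref{equivS} and the standard fact that the fat nerve of a small category is Segal (which you spell out rather than cite). No gaps.
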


\begin{proof}
It follows from the equivalence $\fatnerve \Sur_\text{ord} \eq \ds{S}_{\text{ord}} \eq \ds{S}$ since the fat nerve of a  small category is always Segal~\cite{GKT1}.
\end{proof}

\begin{prop}\label{Xfinite}
    The Segal space $\ds{S}$ is complete, locally finite, locally discrete, and of locally finite length.
\end{prop}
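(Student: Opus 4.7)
The plan is to verify each of the four properties by transporting them through the equivalence $\ds{S} \simeq \fatnerve \Sur_\text{ord}$ of Proposition~\ref{equivS}, so that a 1-simplex becomes a surjection of finite ordinals, $s_0$ becomes insertion of identity surjections, and $d_1$ on 2-simplices becomes composition of two surjections. All four conditions are invariant under simplicial equivalence, so it is enough to verify them on $\fatnerve \Sur_\text{ord}$.

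For completeness, the map $s_0 \colon \underline{n} \mapsto \id_{\underline{n}}$ is injective on isomorphism classes because the source ordinal is recovered from the identity surjection, and it induces an isomorphism on automorphism groups because an automorphism of $\id_{\underline{n}}$ in the groupoid of surjections is a commuting pair of bijections forced to agree; hence $s_0$ is a monomorphism.

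For local finiteness and local discreteness, I would compute the relevant fibres directly in terms of surjections. The fibre of $s_0$ over a surjection $f$ is empty unless $f$ is an isomorphism, in which case it is contractible, so it is automatically finite and discrete. The fibre of $d_1$ over $f \colon \underline{n} \surj \underline{m}$ is the groupoid of factorisations $\underline{n} \surj \underline{k} \surj \underline{m}$ of $f$; up to isomorphism these correspond to partitions of $\underline{n}$ refining the one induced by $f$, yielding only finitely many isomorphism classes. Discreteness follows because an automorphism of such a factorisation (relative to fixed source and target) is a bijection of the intermediate set commuting with both legs, and surjectivity of the first leg forces it to be the identity. Together with the observation that $(\fatnerve\Sur_\text{ord})_1$ has finite automorphism groups (products of symmetric groups), this establishes local finiteness.

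For locally finite length, any non-degenerate $n$-simplex above an edge $f \colon \underline{n} \surj \underline{m}$ is a chain of non-isomorphism surjections $\underline{n} \surj \underline{n}_1 \surj \cdots \surj \underline{n}_{n-1} \surj \underline{m}$; since each non-isomorphism surjection strictly decreases cardinality, we have $n \le |\underline{n}| - |\underline{m}|$, which is finite. The main technical obstacle I foresee is the local discreteness of the $d_1$-fibre: one has to work with the homotopy fibre rather than the strict fibre and check that a morphism between two factorisations over the identity of source and target is uniquely determined by its compatibility with the first (surjective) leg, which is precisely what rules out nontrivial automorphisms.
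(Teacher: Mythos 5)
Your proof is correct and follows essentially the same route as the paper: completeness via the fibres of $s_0$, local finiteness and discreteness via direct computation of the homotopy fibres of $s_0$ and $d_1$ together with finiteness of automorphism groups in degree $1$, and locally finite length via the fact that non-invertible surjections strictly decrease cardinality. The only difference is that you first transport everything along the equivalence $\ds{S}\simeq\fatnerve\Sur_{\text{ord}}$ of Proposition~\ref{equivS} (which is legitimate, as all four conditions are invariant under levelwise equivalence), whereas the paper computes the fibres directly in $\ds{S}$; your identification of the $d_1$-fibre with the discrete groupoid of factorisations, rigid because the first leg is surjective, is if anything stated more carefully than in the paper.
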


\begin{proof}
    The map $s_0$ is a monomorphism because the fibre is empty if the first surjection is not the identity, and is contractible else.
  The groupoid $\ds{S}_1$ is locally finite, because elements of the families are non-empty finite sets, and each finite set has only a finite number of automorphisms.
We have seen $s_0$ is finite and discrete, the map $d_1$ is also finite and discrete: the fibre of $d_1$ over $(n{-}1)$ composable surjections $f \in \ds{S}_n$ is either empty or the finite discrete groupoid of $n$ composable surjections where the first one is the identity and the other surjections are given by $f$.
Finally, $\ds{S}$ is of locally finite length (every edge $f$ has finite length): the degenerate simplices are families where one of the surjections is an identity, or the last set is a singleton. The fibre of $f$ has no nondegenerate simplices for $n$ greater than the total number of elements of the source sets of the family.
\end{proof}

\begin{rmk}\label{notSegal}
The decomposition space $\ds{H}$ is not usually a Segal space. The base case of the Segal condition stipulates
that the square
\begin{center}
    \begin{tikzcd}
      \ds{H}_2 \ar[r, "d_2"] \ar[d, "d_0"']
          & \ds{H}_1 \ar[d, "d_0"] \\
      \ds{H}_1 \ar[r, "d_1"'] & \ds{H}_0
    \end{tikzcd}
\end{center}
is a pullback. This would mean that one should be able to reconstruct a $H$-structure on a surjection $V \surj P$ by knowing it on $P$ and on the fibres $V_1,\dots,V_p$.
In other words, one could substitute the $H$-structures on $V_1,\ldots, V_p$ into the elements of $P$ of another $H$-structure, as if those elements were `input slots' of the operation of an operad.

Consider for example the case of simple graphs (Example~\ref{graphs}). Given $p$ simple graphs with vertex sets $V_1,\dots, V_p$ and another graph with $p$ vertices, there is no canonical prescription for substituting the $p$ graphs into those vertices.
\end{rmk}

Since in every degree, the groupoid is given by applying $\Smonad$, the decomposition space $\ds{S}$ is automatically a monoidal decomposition space.
The associated incidence bialgebra has the property that the comultiplication applied to a connected element gives a monomial in the left-hand tensor factor and a connected element (linear  factor) in the right-hand tensor factor. That's the immediate conclusion of the fact that $d_2$ requires $\Smonad$ whereas $d_0$ does not.
 This observation can be formalised at the simplicial level by the following result.
 
Recall from \cite[Proposition 2.1.1]{Ca:bicomodules} (see also \cite{Walde} and \cite{Young}) that the decomposition space analog of (left) comodule is given by a simplicial map $f\colon C \to X$ between two simplicial groupoids such that $C$ is Segal, $X$ is a decomposition space and the map $f\colon C \to X$ is culf.
Then the span
\[
    C_{0}  \xleftarrow{d_1} C_{1} 
    \xrightarrow{(f_{1},d_0)} X_{1} \times C_{0}
\]
induces on the slice category $\Grpd_{/C_0}$ the structure of a left $\Grpd_{/X_1}$-comodule.

\begin{lem}\label{CcomodS}
  The slice category $\Grpd_{/\ds{C}_1}$ is a left comodule over $\Grpd_{/\ds{S}_1}$.
\end{lem}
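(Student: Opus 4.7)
The plan is to apply the criterion just recalled from \cite[Proposition 2.1.1]{Ca:bicomodules}: it suffices to produce a Segal simplicial groupoid $C$ with $C_0 \eq \ds{C}_1$, together with a culf simplicial map $f\colon C \to \ds{S}$. The span $C_0 \xleftarrow{d_1} C_1 \xrightarrow{(f_1,d_0)} \ds{S}_1 \times C_0$ will then supply the coaction on $\Grpd_{/C_0} \eq \Grpd_{/\ds{C}_1}$.

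I would take $C \coloneqq \fatnerve\Sur_{\text{nonempty}}$, the fat nerve of the category of non-empty finite sets and surjections; its $0$-simplices are precisely $\ds{C}_1$, and it is Segal because the fat nerve of any $1$-category is Segal \cite{GKT1}. For the map $f$, I would use the equivalence $\fatnerve\Sur \eq \ds{S}$ of Proposition~\ref{equivS} and take
\[
    f\colon \fatnerve\Sur_{\text{nonempty}} \hookrightarrow \fatnerve\Sur \eq \ds{S}.
\]
Unwinding, $f_n$ sends a chain $V_0\surj\cdots\surj V_n$ to the family $\{(V_0)_k\surj\cdots\surj (V_{n-1})_k\}_{k\in V_n}$ of fibre subchains, exactly in the manner the top face map of $\ds{S}$ is defined; simpliciality of $f$ is inherited from Proposition~\ref{equivS}. (Equivalently, $C$ can be described as a décalage of $\ds{C}$, with $C_n = \ds{C}_{n+1}$: the face maps $d_0,\dots,d_n\colon \ds{C}_{n+1}\to\ds{C}_n$ are all available, since $\Dec_\top$ discards precisely the top face that $\ds{C}$ was lacking in the first place.)

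The only substantive verification is culfness of $f$, which by the equivalence reduces to culfness of the subcategory inclusion $\fatnerve\Sur_{\text{nonempty}} \hookrightarrow \fatnerve\Sur$. Because $\Sur_{\text{nonempty}}$ is full in $\Sur$, the comparison into each naturality pullback is fully faithful automatically, and I only need essential surjectivity on the generators of active maps. For an inner face map $d_i$ (composing surjections at position $i$), the only object that can be forgotten is $V_i$, sandwiched as $V_{i-1}\surj V_i\surj V_{i+1}$; since $V_{i+1}$ survives into $d_i(c)$ and is therefore non-empty, and since any surjection onto a non-empty set has non-empty source, $V_i$ must be non-empty as well. For a degeneracy $s_i$ the condition is immediate, since $s_i$ merely duplicates $V_i$. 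The main (and only real) obstacle is articulating this closure-under-surjection property of $\Sur_{\text{nonempty}}$ inside $\Sur$; once it is in place, \cite[Proposition 2.1.1]{Ca:bicomodules} delivers the desired comodule structure on $\Grpd_{/\ds{C}_1}$.
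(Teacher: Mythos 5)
Your proof is correct and follows essentially the same route as the paper: the paper also takes the Segal space to be $\Dec_\top\ds{C}$ (which, as you note parenthetically, is exactly your $\fatnerve\Sur_{\text{nonempty}}$) and the culf map to be the fibre map $d_\top$ into $\ds{S}$. The only difference is cosmetic: where the paper justifies culfness by observing that the map is essentially the unit of the monad $\Smonad$ and hence cartesian, you reduce it to culfness of the full subcategory inclusion $\fatnerve\Sur_{\text{nonempty}}\hookrightarrow\fatnerve\Sur$ and check the generators, which is a valid (and somewhat more explicit) verification of the same fact.
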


\begin{proof}
Note that $\ds{C}$ is lacking top face maps, but $\Dec_\top \ds{C}$ is a genuine simplicial groupoid as required by the notion of comodule.
Since $\Dec_\top(\ds{C})$ is a Segal space (the décalage of a decomposition space is always a Segal space~\cite[Proposition 4.9]{GKT1}), and $\ds{S}$ is a decomposition space, we just need to exhibit a culf map $\Dec_\top(\ds{C}) \to \ds{S}$, which is given by $d_\top$.
  Note that it is essentially the unit for the monad $\Smonad$, therefore it is cartesian, and in particular culf.
\end{proof}

In fact, we can consider all (possibly empty) finite sets, and still obtain a comodule structure. (The surjection $\emptyset \to \emptyset$ will be sent to the empty family.)

\begin{lem}\label{NcomodS}
  The slice category $\Grpd_{/\fatnerve{\Sur}_0}$ is a left comodule over $\Grpd_{/\ds{S}_1}$.
\end{lem}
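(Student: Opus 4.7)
The plan is to mimic the proof of Lemma~\ref{CcomodS}: exhibit a Segal space $C$ with $C_0 \simeq \fatnerve{\Sur}_0$ equipped with a culf map $f\colon C \to \ds{S}$, and then invoke \cite[Proposition~2.1.1]{Ca:bicomodules}. The natural candidate is $C \coloneqq \fatnerve{\Sur}$, the fat nerve of the category of all finite sets (now including $\emptyset$) and surjections. This is Segal automatically, being the fat nerve of a small category, and has $C_0 = \fatnerve{\Sur}_0$ by definition.

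The map $f\colon \fatnerve{\Sur} \to \ds{S}$ will be defined levelwise by sending a chain $V_0 \surj \cdots \surj V_n$ to the family, indexed by elements of $V_n$, of chains of fibres $(V_0)_{v_n} \surj \cdots \surj (V_{n-1})_{v_n}$; in particular the all-empty chain $\emptyset \surj \cdots \surj \emptyset$ is sent to the empty list. Simpliciality follows from the pseudosimplicial compatibilities (fibres commuting with composition of surjections and with insertion of identities) that were already verified in the construction of $\ds{S}$.

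The main issue is to verify culfness, and my strategy is to exploit the fact that the empty set is isolated in $\Sur$: the only surjection with $\emptyset$ as source or target is the identity $\emptyset \surj \emptyset$. Consequently $\fatnerve{\Sur}$ splits as a coproduct of the fat nerve of the full subcategory $\Sur^{\mathrm{ne}}$ on non-empty finite sets and a constant simplicial groupoid at a point (indexing the all-empty chains). On the first component, $f$ coincides, under the equivalence $\Dec_\top \ds{C} \simeq \fatnerve{\Sur^{\mathrm{ne}}}$, with the culf map produced in Lemma~\ref{CcomodS}. On the point-component, $f$ is constant at the empty list of $\ds{S}$; every active simplicial operator on $\ds{S}$ (inner face or degeneracy) acts chain-by-chain under the $\Smonad$ construction, fixes the empty list, and has contractible fibre over it, so this component is trivially culf. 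Since culfness is preserved by coproducts, $f$ is culf.

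The conclusion is then a direct application of \cite[Proposition~2.1.1]{Ca:bicomodules}. The step I expect to require the most care is the verification that the inner face and degeneracy maps of $\ds{S}$ have contractible fibre over the empty list, which hinges on the fact that under the $\Smonad$ construction these operators neither merge nor delete list entries, so that preimages of the empty list consist only of the empty list itself.
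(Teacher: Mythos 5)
Your proposal follows essentially the same route as the paper: take $C=\fatnerve{\Sur}$ (Segal as the fat nerve of a small category), map it to $\ds{S}$ by sending a chain of surjections to its family of fibre-chains (with $\emptyset\surj\emptyset$ going to the empty family), and invoke the comodule criterion. Your extra work on culfness --- splitting off the isolated empty-set component and reducing the rest to the map of Lemma~\ref{CcomodS} --- is a correct elaboration of what the paper leaves implicit.
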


\begin{proof}
The fat nerve of a category is always a Segal space~\cite[\S 2.14]{GKT1}.
We need to exhibit a culf map $\fatnerve{\Sur} \to \ds{S}$, which is given by sending a surjection to the family of fibres (which are non-empty), as the map $d_\top$ of the decomposition space $\ds{S}$.
\end{proof}

\section{Hereditary species and decomposition spaces}\label{sec:hersp&ds}

We can now add a hereditary structure on the source of each member of the family. Given a hereditary species $H$, define $\ds{H}_1$ to be the groupoid of families of non-empty $H$-structures. 
More formally, $\ds{H}_1$ is defined as families of the Grothendieck construction of the underlying ordinary species $H\colon \B_+ \to \Grpd$, where $\B_+$ denote the category of non-empty finite sets and bijections.
The groupoid $\ds{H}_n$ is given by the pullback
\begin{center}
    \begin{tikzcd}
        \ds{H}_n \ar[r, ""] 
           \ar[d, ""'] 
           \drpbk
           & \ds{H}_1 \ar[d, ""]\\
        \ds{S}_n \ar[r, "\text{source}"'] & \ds{S}_1.
    \end{tikzcd}
\end{center}
Inner face maps and degeneracy maps are defined by pullback.
For example, in the following diagram, the right-hand square  and the rectangle are pullbacks by definition
\begin{center}
    \begin{tikzcd}
        \ds{H}_{3} \ar[r, ""] 
           \ar[d, ""']  
           & \ds{H}_{2} \ar[d, ""] \ar[r]  \drpbk & \ds{H}_1 \ar[d]\\
        \ds{S}_3 \ar[r, "d_2"'] 
           & \ds{S}_2 \ar[r, "d_1"'] & \ds{S}_1.
    \end{tikzcd}
\end{center}
Thus the left-hand square is a pullback and it induces a map $d_2\colon \ds{H}_3 \to \ds{H}_2$.
For the bottom face maps we use that $H$ is functorial in surjections: given an object in $\ds H_n$, that is a chain of surjections $V_1 \surj V_2 \surj \dots \surj V_n$ with a $H$-structure on $V_1$, we get a $H$-structure on $V_2$ by functoriality, and thus an object $V_2 \surj \dots \surj V_n$ of $\ds{H}_{n-1}$.
 For the top face maps, we use (contravariant) functoriality in injections, this is restriction to each of the fibres, and produces thus a family, even if the input is a single chain of surjections.

\begin{prop}
    The groupoids $\ds{H}_j$ form a simplicial groupoid $\ds{H}$.
\end{prop}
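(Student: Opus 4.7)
The plan is to construct all face and degeneracy maps and then check the simplicial (respectively pseudosimplicial) identities, leveraging the structure already established for $\ds{S}$ together with the two functorialities of $H$ linked by the Beck--Chevalley rule.

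First, for inner face maps $d_i\colon \ds{H}_n \to \ds{H}_{n-1}$ (with $0<i<n$) and all degeneracies $s_i\colon \ds{H}_n \to \ds{H}_{n+1}$, I would observe that the corresponding maps on $\ds{S}$ commute with the ``source'' projection $\ds{S}_n \to \ds{S}_1$, since composing or inserting identity surjections in the middle (or at the bottom) leaves the first set in the chain unchanged. By the universal property of the defining pullback square for $\ds{H}_{n-1}$ (resp.\ $\ds{H}_{n+1}$), these maps lift uniquely to $\ds{H}$. This is the approach already illustrated in the paper for $d_2\colon \ds{H}_3 \to \ds{H}_2$. For the bottom face map $d_\bot\colon \ds{H}_n \to \ds{H}_{n-1}$, the source changes, so the pullback argument does not apply directly; instead we use the covariant functoriality of $H$ in surjections to transport the $H$-structure from $V_1$ to $V_2$ along the first surjection. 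For the top face map $d_\top\colon \ds{H}_n \to \ds{H}_{n-1}$, I would use $d_\top$ on $\ds{S}$, which passes to the family of fibres, and then endow each fibre with the restricted $H$-structure via the contravariant functoriality of $H$ in injections. In both cases the relevant compatibilities with the ``source'' projection ensure the construction lands in $\ds{H}_{n-1}$.

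Next, I would verify the simplicial identities, splitting them into cases. Identities among inner face and degeneracy maps are inherited strictly from $\ds{S}$ through the pullback construction, since the lifts are uniquely determined by universal properties. Identities involving only $d_\bot$ together with inner faces/degeneracies follow from the (covariant) functoriality of $H$ in surjections, after checking the corresponding identities in $\Sp$. Identities involving $d_\top$ together with inner faces/degeneracies follow from the (contravariant) functoriality of $H$ in injections, using that the pullback squares defining fibres commute with composition of surjections. The pseudosimplicial identity $d_\top \circ d_\top \eq d_\top \circ d_{\top-1}$ is inherited from the analogous pseudoidentity for $\ds{S}$ together with the natural iso between the fibre-of-a-fibre and the original fibre used on $H$-structures via contravariant functoriality.

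The main obstacle, and the one genuinely using the full hereditary-species structure, is the mixed identity $d_\top \circ d_\bot \eq d_\bot \circ d_\top$. Concretely, starting from a chain $V_1 \overset{f}{\surj} V_2 \surj \dots \surj V_n$ with $H$-structure on $V_1$, the route $d_\bot$ then $d_\top$ pushes the $H$-structure forward along $f$ and then restricts it to each fibre of $V_2 \surj V_3$; the route $d_\top$ then $d_\bot$ first restricts along the injections $V_1|_w \hookrightarrow V_1$ (for $w$ in the second set of the shortened chain) and then pushes forward along the resulting surjections on fibres. The underlying squares of sets are pullbacks in $\Sp$, and the equality of the two $H$-structures obtained is exactly the Beck--Chevalley law $H[p']\circ H[i] = H[j]\circ H[p]$ demanded of a hereditary species. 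This is the crucial input and the place where ``hereditary'' (as opposed to merely restriction-plus-surjection functorialities) is genuinely used; all remaining compatibilities follow essentially formally.
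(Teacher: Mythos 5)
Your proposal is correct and follows essentially the same route as the paper: inner faces and degeneracies lifted through the defining pullbacks over $\ds{S}$, bottom faces via covariant functoriality in surjections, top faces via contravariant functoriality in injections, and the mixed identity $d_\top \circ d_\bot \eq d_\bot \circ d_\top$ singled out as the one place where the Beck--Chevalley law is genuinely needed. The paper phrases that last step as a computation with the linear functors $(d_\bot)_!$ and $f^*$ arising from the pullback squares, whereas you phrase it directly on $H$-structures restricted to fibres, but these are the same argument.
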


\begin{proof}
Checking the simplicial identities requires precisely the three functorialities of the notion of hereditary species (covariant in surjections, contravariant in injections, and Beck--Chevalley condition).
For example, the maps $d_\top$ and $d_\bot$ are given by the following pullbacks
\begin{center}
    \begin{tikzcd}
        \ds{H}_{n+1} \ar[r, "d_\top"] 
           \ar[d, ""']  \drpbk
           & \ds{H}_{n} \drpbk \ar[d, "f"] \ar[r, "d_\bot"] & \ds{H}_{n-1} \ar[d, "g"]\\
        \ds{S}_{n+1} \ar[r, "d_\top"'] 
           & \ds{S}_n \ar[r, "d_\bot"'] & \ds{S}_{n-1}\\
    \end{tikzcd}
\end{center}
and by Beck--Chevalley: 
\begin{align*}
    (d_\bot)_!f^*d_\top & \eq g^*((d_\bot)_!d_\top) \\
    (d_\bot)! d_\top    & \eq g^*(d_\bot d_\top) \\
     d_\bot d_\top      & \eq g^*(d_\top d_\bot) \\
     d_\bot d_\top      & \eq d_\top d_\bot.
\end{align*}
\end{proof}

\begin{prop}\label{Hds}
    The simplicial groupoid $\ds{H}$ is a monoidal decomposition space.
\end{prop}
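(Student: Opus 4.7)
The plan is to exhibit the projection $\pi\colon \ds{H}\to \ds{S}$ as a culf map and then to transfer structure from $\ds{S}$, which is already known to be a monoidal decomposition space by Proposition~\ref{SSSegal} together with the remarks on the $\Smonad$-structure immediately preceding Lemma~\ref{CcomodS}.

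For culfness of $\pi$, the key observation is that by construction each $\ds{H}_n=\ds{S}_n\times_{\ds{S}_1}\ds{H}_1$ is the pullback over the source projection $\mathrm{src}\colon\ds{S}_n\to \ds{S}_1$ (extraction of the initial vertex). For any active map $\alpha\colon[m]\to[n]$, active-ness forces $\alpha(0)=0$, so $\mathrm{src}_n = \mathrm{src}_m\circ \alpha^*$ as maps $\ds{S}_n\to \ds{S}_1$. The prism
\begin{center}
  \begin{tikzcd}
    \ds{H}_n \ar[r,"\alpha^*"]\ar[d] & \ds{H}_m \ar[r]\ar[d]\drpbk & \ds{H}_1 \ar[d]\\
    \ds{S}_n \ar[r,"\alpha^*"'] & \ds{S}_m \ar[r,"\mathrm{src}"'] & \ds{S}_1
  \end{tikzcd}
\end{center}
then has its right-hand square a pullback by definition of $\ds{H}_m$ and its outer rectangle a pullback by definition of $\ds{H}_n$, so Lemma~\ref{prismlemma} forces the left-hand square to be a pullback. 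Thus $\pi$ is cartesian on every active map, i.e.\ culf (in practice, it suffices to check this on inner face maps and degeneracies, which generate the active maps and are precisely the structure maps constructed in the previous proposition). Proposition~\ref{culfoverds} then immediately yields that $\ds{H}$ is a decomposition space.

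For the monoidal structure, the symmetric monoidal structure on $\ds{S}$ comes from the free symmetric monoidal monad $\Smonad$ (concatenation of lists). It lifts to $\ds{H}_1$ by concatenation of families of $H$-structures (with the empty family as unit) and extends to each $\ds{H}_n$ through the pullback presentation, since concatenation on the two factors $\ds{S}_n$ and $\ds{H}_1$ is strictly compatible with the source map. The remaining point is that the lifted $\otimes\colon \ds{H}\times\ds{H}\to\ds{H}$ and $\eta\colon 1\to \ds{H}$ are culf: since the corresponding maps on $\ds{S}$ already are, and since the maps on $\ds{H}$ are defined levelwise through pullback over $\ds{S}$, another application of the prism lemma (pasting the culf square on $\ds{S}$ onto the defining pullback for $\ds{H}_n$) delivers the required pullback squares on $\ds{H}$.

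The hard part I anticipate is not any single pullback but the bookkeeping in this last step, in particular tracking the coherence homotopies that make $\ds{H}$ only pseudosimplicial. However, because every structure map on $\ds{H}$ is by construction a pullback of something happening on $\ds{S}$ — where the desired properties already hold — the argument should reduce to organised prism and Beck--Chevalley chases rather than any substantively new computation.
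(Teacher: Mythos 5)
Your proposal is correct and follows essentially the same route as the paper: the paper's proof is exactly "there is a culf map to the Segal space $\ds{S}$ by construction, so Proposition~\ref{culfoverds} applies, and the monoidal structure is concatenation of families." Your prism-lemma argument merely spells out the "by construction" step (the defining pullbacks $\ds{H}_n = \ds{S}_n\times_{\ds{S}_1}\ds{H}_1$ over the source map, which active maps preserve), which is a worthwhile elaboration but not a different method.
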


\begin{proof}
This follows from Proposition~\ref{culfoverds} because there is a culf map to the decomposition space $\ds{S}$ (in fact even a Segal space) by construction. The monoidal structure is obtained by concatenation of families.
\end{proof}

\begin{lem}\label{H1locfinite}
    The groupoid $\ds{H}_1$ is locally finite.
\end{lem}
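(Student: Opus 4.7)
The plan is to realise $\ds{H}_1$ as $\Smonad$ applied to a locally finite groupoid, and then check that $\Smonad$ preserves local finiteness. By definition, $\ds{H}_1 \eq \Smonad(E)$, where $E$ is the Grothendieck construction of the species $H|_{\B_+}\colon \B_+ \to \Grpd$: its objects are pairs $(V, G)$ with $V$ a non-empty finite set and $G \in H[V]$, and its morphisms $(V, G) \to (V', G')$ are pairs $(f, \alpha)$ with $f\colon V \to V'$ a bijection and $\alpha\colon H[f](G) \to G'$ an iso in $H[V']$.

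The first step is to show $E$ is locally finite. An automorphism of $(V, G)$ is a pair $(f, \alpha)$ with $f \in \Aut(V) = S_{|V|}$ finite and $\alpha\colon H[f](G) \to G$ an iso in $H[V]$. Thus the forgetful map $\Aut_E(V, G) \to \Aut(V)$ lands in a finite group, and each fibre is finite under the ambient finiteness hypothesis that $H$ takes values in locally finite groupoids (in the combinatorial examples of interest, such as simple graphs or matroids, the values are even discrete sets). Hence $\Aut_E(V,G)$ is finite.

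The second step is to show that $\Smonad$ preserves local finiteness. The automorphism group of a list $((V_1, G_1), \dots, (V_k, G_k))$ fits in a short exact sequence
\[ 1 \longrightarrow \prod_{i=1}^k \Aut_E(V_i, G_i) \longrightarrow \Aut_{\ds H_1}\bigl((V_1,G_1), \dots, (V_k,G_k)\bigr) \longrightarrow \prod_j S_{n_j} \longrightarrow 1, \]
where $n_j$ records the multiplicity of each iso-class among the members and the rightmost arrow sends an automorphism to its underlying permutation of list entries. Both kernel and quotient are finite, so the total group is finite, proving local finiteness of $\ds{H}_1$.

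The only subtlety is that the permutation of list entries in an automorphism is constrained to mix only iso-equivalent members, yielding the factor $\prod_j S_{n_j}$ rather than the full $S_k$; beyond this bookkeeping and the ambient finiteness hypothesis on $H$, no serious obstacle is expected.
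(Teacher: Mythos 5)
Your proof is correct and follows essentially the same route as the paper's, which simply observes that objects of $\ds{H}_1$ are families of $H$-structures on non-empty finite sets and that finite sets have finite automorphism groups; you merely spell out the group extension $1 \to \prod_i \Aut_E(V_i,G_i) \to \Aut(\underline{V}) \to \prod_j S_{n_j} \to 1$ that the paper leaves implicit. The one substantive point you add is the explicit hypothesis that $H$ takes values in locally finite groupoids, which the paper does not state but does tacitly assume (its one-line argument only covers the underlying set symmetries, and indeed its later local-discreteness claims presuppose essentially discrete values as in Schmitt's original $\Set$-valued setting).
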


\begin{proof}
    The objects of the groupoid $\ds{H}_1$ are families of non-empty $H$-structures, and elements of the families are non-empty finite sets. Since a finite set has only a finite number of automorphism, the automorphisms groups are finite.
\end{proof}

\begin{prop}\label{Hfinite}
    The decomposition space $\ds{H}$ is complete, locally finite, locally discrete, and of locally finite length.
\end{prop}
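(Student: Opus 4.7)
The plan is to transfer each of the four properties from $\ds{S}$ to $\ds{H}$ along the culf map $\ds{H} \to \ds{S}$ established in the proof of Proposition~\ref{Hds}, using that $\ds{S}$ already possesses all four properties by Proposition~\ref{Xfinite}.

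For the properties of being locally finite, locally discrete, and of locally finite length, Proposition~\ref{propCULF} provides precisely what is needed: these three properties descend along culf maps into a decomposition space. The locally-discrete and locally-finite-length cases are immediate. For the locally-finite case, the additional hypothesis that $\ds{H}_1$ itself be locally finite is exactly Lemma~\ref{H1locfinite}.

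Completeness is not explicitly covered by Proposition~\ref{propCULF}, so I would treat it by a direct pullback argument. Completeness means that $s_0 \colon \ds{H}_0 \to \ds{H}_1$ is a monomorphism. Since the degeneracy $s_0$ corresponds to an active (endpoint-preserving) arrow in $\simplexcat$, and the culf map $\ds{H} \to \ds{S}$ is by definition cartesian on active arrows, the naturality square
\[
\begin{tikzcd}
\ds{H}_0 \ar[r, "s_0"] \ar[d] \drpbk & \ds{H}_1 \ar[d] \\
\ds{S}_0 \ar[r, "s_0"'] & \ds{S}_1
\end{tikzcd}
\]
is a pullback of groupoids. Monomorphisms are stable under pullback, and $s_0 \colon \ds{S}_0 \to \ds{S}_1$ is a monomorphism by Proposition~\ref{Xfinite}, so $s_0 \colon \ds{H}_0 \to \ds{H}_1$ is a monomorphism as well.

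There is no substantial obstacle to expect: once Proposition~\ref{Hds}, Proposition~\ref{propCULF}, Proposition~\ref{Xfinite}, and Lemma~\ref{H1locfinite} are in place, the proof is essentially a bookkeeping exercise. The only mild subtlety is noticing that completeness lies outside the scope of Proposition~\ref{propCULF} as stated, but it is easily recovered from the culf-plus-pullback argument above.
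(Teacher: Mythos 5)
Your proof is correct and follows essentially the same route as the paper, whose one-line proof simply cites Proposition~\ref{propCULF}, Lemma~\ref{H1locfinite}, and Proposition~\ref{Xfinite} together with the culf map $\ds{H}\to\ds{S}$. Your explicit pullback argument for completeness (using that $s_0$ is active, so the culf naturality square is a pullback and monomorphisms pull back) fills in a point the paper glosses over, since Proposition~\ref{propCULF} as stated does not mention completeness.
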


\begin{proof}
    It follows from Lemmas~\ref{propCULF} and \ref{H1locfinite}, and Proposition~\ref{Xfinite} since there is a functor $\ds{H} \to \ds{S}$ which is culf by construction.
\end{proof}

\begin{prop}\label{Schmittcoincide}
    The incidence bialgebra $B$ obtained by taking homotopy cardinality of $\Grpd_{/\ds{H}_1}$ coincides with the Schmitt bialgebra associated to a hereditary species $H$.
\end{prop}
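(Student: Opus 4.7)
The plan is to match the four pieces of bialgebra data. The underlying vector space $\Q_{\pi_0(\ds{H}_1)}$ has a basis of isomorphism classes of families of non-empty $H$-structures, agreeing with Schmitt's basis. The monoidal structure on $\ds{H}$ is concatenation of families, yielding the free commutative algebra with the empty family as unit, matching Schmitt's algebra. For the counit $\varepsilon = z_! s_0^*$, one identifies $\ds{H}_0$ with families of $H$-structures on singleton sets (the source map $\ds{S}_0 \to \ds{S}_1$ sends an indexing set $V$ to the family of $|V|$ singletons); since $\ds{H}$ is complete (Proposition~\ref{Hfinite}), the map $s_0$ is monic, and $|\varepsilon|(G)$ evaluates to $1$ when every member of $G$ sits on a singleton and to $0$ otherwise, as in Schmitt.

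The main work is the comultiplication. I would reduce to a \emph{connected} element, a single $H$-structure $G$ on a non-empty finite set $V$, viewed as a one-element family in $\ds{H}_1$. For such $G$, $\Delta(G) = (d_2, d_0)_! \, d_1^*(G)$. Unfolding the defining pullback $\ds{H}_2 = \ds{S}_2 \times_{\ds{S}_1} \ds{H}_1$, the fibre of $d_1$ over $(V, G)$ consists of surjections out of $V$ carrying the fixed $H$-structure $G$. This fibre is discrete because $\ds{H}$ is locally discrete (Proposition~\ref{Hfinite}), and all automorphism groups are trivial: a self-isomorphism of $(V \surj P, G)$ whose $d_1$-image is $\id_{(V,G)}$ must be $\id_V$ on the source, and then $\id_P$ on the target by surjectivity. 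Its isomorphism classes are therefore in bijection with partitions $\pi \in \Pi(V)$.

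On each such $(V \surj V/\pi, G)$, the face map $d_2 = d_\top$ takes fibres with restricted $H$-structure (contravariant functoriality on injections) and yields the family $G|\pi$, while $d_0 = d_\bot$ takes the quotient with pushed-forward structure (covariant functoriality on surjections) and yields $G/\pi$. Since the formula is denominator-free by local discreteness, homotopy cardinality gives
\[
|\Delta|(G) = \sum_{\pi \in \Pi(V)} (G|\pi) \otimes (G/\pi),
\]
precisely Schmitt's formula on a connected generator. Extension to general families is automatic since both comultiplications are algebra homomorphisms with respect to concatenation.

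The main obstacle is the identification of the fibre of $d_1$ with $\Pi(V)$ together with the verification that the automorphism groups there are trivial. This hinges on carefully unwinding the pullback definition of $\ds{H}_2$ and interpreting the face map $d_1$ as ``extract sources'' (coming from $\Smonad$ applied to the $d_1$ face map of $\ds{C}_2$); the equivalence $\ds{S}_{\text{ord}} \eq \fatnerve \Sur_{\text{ord}}$ of Proposition~\ref{equivS} is the key tool for reading off face maps, and local discreteness of $\ds{H}$ is what ensures the homotopy-cardinality formula has no denominators.
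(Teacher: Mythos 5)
Your proposal is correct and follows essentially the same route as the paper: unwind the span $\ds{H}_1 \xleftarrow{d_1} \ds{H}_2 \xrightarrow{(d_2,d_0)} \ds{H}_1\times\ds{H}_1$ on a basis element $G$, identify the $d_1$-fibre with the (discrete, by local discreteness) groupoid of surjections out of the underlying set, i.e.\ with $\Pi(V)$, and read off $d_2$ as restriction to fibres and $d_0$ as the quotient, recovering Schmitt's formula. The extra details you supply (the explicit triviality of automorphism groups in the fibre, the counit and the free algebra structure) are consistent with, and slightly more thorough than, the paper's argument.
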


\begin{proof}
The comultiplication is given by pullback along $d_1$ followed by composition with $(d_2,d_0)$.
Given a $H$-structure $G$ as a morphism $1 \xra{\name{G}} \ds{H}_1$, the pullback along $d_1$ is the groupoid $(\ds{H}_2)_G$ of families of surjections with the source sets given by $G$ (thus having $H$-structure).
Composing with $(d_2,d_0)$ amounts to returning for each surjection $V \surj P$ the family of fibres over the element of $P$ (having $H$-structure given by restriction), and the target set $P$ (having $H$-structure given by quotient).
We can then take cardinality since $\ds{H}$ is locally finite. Since $\ds H$ is locally discrete, $(\ds{H}_2)_G$ is discrete and the homotopy cardinality counts isomorphisms classes, giving Schmitt's comultiplication.
\end{proof}

\paragraph{Comodule structure}

Given a hereditary species $H$, define $\ds{M}_0$ to be the groupoid of (possibly empty) $H$-structures. 
More formally, $\ds{M}_0$ is defined as the Grothendieck construction of the underlying ordinary species $H\colon \B \to \Grpd$.
The groupoid $\ds{M}_n$ is given by the pullback
\begin{center}
    \begin{tikzcd}
        \ds{M}_n \ar[r, ""] 
           \ar[d, ""'] 
           \drpbk
           & \ds{M}_0 \ar[d, ""]\\
        \fatnerve \Sur_n \ar[r, "\text{source}"'] & \fatnerve\Sur_0.
    \end{tikzcd}
\end{center}
The objects of the groupoid $\ds{M}_1$ are surjections with a $H$-structure on the source; the objects of $\ds{M}_n$ are composable surjections with a $H$-structure on the source.
In the same way as for $\ds{H}$, the face and degeneracy maps are defined by pullback.

\begin{lem}
  The groupoids $\ds{M}_n$ form a Segal space $\ds{M}$. 
\end{lem}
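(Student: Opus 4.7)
The plan is to transfer the Segal property from $\fatnerve\Sur$ to $\ds{M}$ using the defining pullback $\ds{M}_n = \fatnerve\Sur_n \times_{\fatnerve\Sur_0} \ds{M}_0$. The base ingredient is that $\fatnerve\Sur$ is Segal, since it is the fat nerve of a small category, as already invoked in the proof of Proposition~\ref{SSSegal}.

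I would proceed by induction on $n$, reducing the problem to establishing the 2-Segal-type equivalence $\ds{M}_n \eq \ds{M}_1 \times_{\ds{M}_0} \ds{M}_{n-1}$ for each $n \geq 2$. Starting from the defining pullback, I would substitute the $\fatnerve\Sur$-Segal decomposition $\fatnerve\Sur_n \eq \fatnerve\Sur_1 \times_{\fatnerve\Sur_0} \fatnerve\Sur_{n-1}$, and use the key combinatorial observation that the source map $\fatnerve\Sur_n \to \fatnerve\Sur_0$ factors through the first factor as $\fatnerve\Sur_n \to \fatnerve\Sur_1 \xra{d_\top} \fatnerve\Sur_0$, since the source of an $n$-chain is the source of its first $1$-simplex. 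Two applications of the pasting lemma (Lemma~\ref{prismlemma}) then reorganise the iterated pullback into $(\fatnerve\Sur_1 \times_{\fatnerve\Sur_0} \ds{M}_0) \times_{\ds{M}_0} (\fatnerve\Sur_{n-1} \times_{\fatnerve\Sur_0} \ds{M}_0) = \ds{M}_1 \times_{\ds{M}_0} \ds{M}_{n-1}$.

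The hard part will be identifying the face maps used in the resulting fiber product with the simplicial face maps of $\ds{M}$ as previously defined. In particular, $d_0\colon \ds{M}_1 \to \ds{M}_0$ sends $(V_0 \surj V_1, G)$ to $(V_1, H[V_0 \surj V_1](G))$ and thus encodes the covariant functoriality of $H$ under surjections; it is not simply the projection coming from the defining pullback of $\ds{M}_1$. The observation to exploit is that its composition with the forgetful map $\ds{M}_0 \to \fatnerve\Sur_0$ coincides with the tautological $d_0\colon \fatnerve\Sur_1 \to \fatnerve\Sur_0$, which is exactly what the pullback rearrangement sees. The additional $H$-datum is absorbed into the shared fiber $\ds{M}_0$, ensuring that the equivalence respects the Segal gluing with $d_0$ on the $\ds{M}_1$-side and the iterated top face map (source projection) on the $\ds{M}_{n-1}$-side.
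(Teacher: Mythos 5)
Your argument is correct, but it takes a genuinely different route from the paper. The paper's proof identifies $\ds{M}$ level-wise with the fat nerve of the Grothendieck construction $\int H$ of the underlying functor $H\colon\Sur\to\Grpd$ (an $n$-simplex there carries an $H$-structure on \emph{every} set of the chain together with specified isomorphisms with the pushed-forward structures, and those specified isomorphisms are exactly what makes the comparison a level-wise equivalence), and then invokes the general fact that the fat nerve of a category is Segal. You bypass that identification and transfer the Segal condition directly from $\fatnerve\Sur$ along the defining pullbacks $\ds{M}_n \eq \fatnerve\Sur_n\times_{\fatnerve\Sur_0}\ds{M}_0$. Your two key observations are exactly the right ones: the source map $\fatnerve\Sur_n\to\fatnerve\Sur_0$ factors through the first Segal factor, and $d_0\colon\ds{M}_1\to\ds{M}_0$, although it encodes the covariant functoriality of $H$, projects down to the plain $d_0$ of $\fatnerve\Sur_1$, so that both $\ds{M}_n$ and $\ds{M}_1\times_{\ds{M}_0}\ds{M}_{n-1}$ collapse via Lemma~\ref{prismlemma} to the same groupoid $\ds{M}_1\times_{\fatnerve\Sur_0}\fatnerve\Sur_{n-1}$, compatibly with the Segal comparison map. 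In effect you are checking that $\ds{M}\to\fatnerve\Sur$ is cartesian on the inert maps occurring in the Segal squares, which suffices to pull the Segal property back. What the paper's route buys is conceptual economy and a reusable description of $\ds{M}$ as a fat nerve; what yours buys is that no auxiliary equivalence with the Grothendieck construction needs to be constructed. The one point you leave implicit, as does the paper, is the preliminary check that the $\ds{M}_n$ assemble into a simplicial groupoid at all, which uses the three functorialities of a hereditary species.
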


\begin{proof}
    It is easy to check that they assemble into a simplicial groupoid, using the three functorialities of the notion of hereditary species.
    The simplicial groupoid is equivalent to the fat nerve of the Grothendieck construction of the underlying species $H\colon \Sur \to \Grpd$.
    Whereas $\ds{H}$ is defined as a chain of surjections with a $H$-structure on the source, in the Grothendieck construction, an $n$-simplex is a chain with an $H$-structure on \emph{each} set, and with specified isomorphisms with the $H$-structures pushed forward along the surjections. The presence of these specified isomorphisms readily shows that the two simplicial groupoids are level-wise equivalent.
    Thus $\ds{M}$ is Segal since the fat nerve of a  small category is always Segal~\cite{GKT1}.
\end{proof}

\begin{lem}\label{Hcomodule}
  The slice category $\Grpd_{/\ds{M}_0}$ is a left comodule over $\Grpd_{/\ds{H}_1}$.
\end{lem}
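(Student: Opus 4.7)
The strategy parallels that of Lemmas~\ref{CcomodS} and \ref{NcomodS}. Since $\ds{M}$ is Segal (by the preceding lemma) and $\ds{H}$ is a decomposition space (Proposition~\ref{Hds}), the desired left comodule structure follows, by the general principle recalled before Lemma~\ref{CcomodS}, once a culf simplicial map $\ds{M} \to \ds{H}$ has been exhibited.

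I define this map by the fibre-family construction, carrying the $H$-structure along via restriction to fibre-inclusions. Explicitly, an $n$-simplex of $\ds{M}$ --- a chain $V_0 \surj V_1 \surj \cdots \surj V_n$ together with an $H$-structure $G$ on $V_0$ --- is sent to the family, indexed by $v \in V_n$, of fibre chains $(V_0)_v \surj \cdots \surj (V_{n-1})_v$, with each first term $(V_0)_v$ carrying the restricted $H$-structure $H[i_v](G)$ along the canonical inclusion $i_v\colon(V_0)_v \into V_0$. That this assignment is simplicial follows, as in the proof of Proposition~\ref{Hds}, from the three functorialities of a hereditary species: covariance in surjections, contravariance in injections, and the Beck--Chevalley compatibility between them. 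By construction, it fits into a commutative square of simplicial groupoids
\[
\begin{tikzcd}
\ds{M} \ar[r] \ar[d] & \ds{H} \ar[d] \\
\fatnerve\Sur \ar[r] & \ds{S}
\end{tikzcd}
\]
whose vertical arrows forget the $H$-structure and whose bottom arrow is the culf map of Lemma~\ref{NcomodS}.

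To check the culf property of the top map, I examine its naturality square along an active arrow $\alpha\colon[n]\to[k]$ in $\simplexcat$, and use the pullback presentations $\ds{M}_n = \fatnerve\Sur_n \times_{\fatnerve\Sur_0}\ds{M}_0$ and $\ds{H}_n = \ds{S}_n \times_{\ds{S}_1}\ds{H}_1$. By repeated application of Lemma~\ref{prismlemma}, the top naturality square decomposes into the naturality square of the bottom culf map (already a pullback) together with pullback squares expressing the agreement between the single $H$-structure $G$ on $V_0$ and its family of restrictions to fibres --- the latter again an instance of Beck--Chevalley. The main technical obstacle is the bookkeeping in this decomposition: absorbing the shift between $\fatnerve\Sur$ (with $n$ surjections at level $n$) and $\ds{S}$ (with $n-1$) into the fibre-family construction, in a way compatible with active arrows. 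Once this bookkeeping is complete, the culf property of the top arrow follows formally from that of the bottom by pullback pasting.
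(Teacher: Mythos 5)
Your proposal is correct and follows essentially the same route as the paper: the paper's proof simply declares that the culf map $f\colon \ds{M}\to\ds{H}$ is given by taking fibres ``as the map $d_\top$ of $\ds{H}$'' and then invokes the span $\ds{M}_0 \xleftarrow{d_1} \ds{M}_1 \xrightarrow{(f_1,d_0)} \ds{H}_1\times\ds{M}_0$. You merely supply the verification details (simpliciality via the three functorialities, and culfness via pullback pasting over the square comparing $\fatnerve\Sur\to\ds{S}$ with $\ds{M}\to\ds{H}$) that the paper leaves implicit.
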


\begin{proof}
   The culf map $f\colon \ds{M} \to \ds{H}$ is given by taking fibres, as the map $d_\top$ of $\ds{H}$.
 Thus the span
    \[
    \ds{M}_{0}  \xleftarrow{d_1} \ds{M}_{1} 
    \xrightarrow{(f_{1},d_0)} \ds{H}_1 \times \ds{M}_{0}
\]
induces on $\Grpd_{/\ds{M}_0}$ the structure of a comodule over $\Grpd_{/\ds{H}_1}$.
\end{proof}

\section{The incidence comodule bialgebra of a hereditary species}\label{sec:comodulebialgebra}

We have constructed, via a monoidal decomposition space, the incidence bialgebra $B$ of a hereditary species $H$. It is the vector space spanned by all families of non-empty $H$-structures. But every hereditary species is in particular a restriction species, by precomposition with the inclusion $\I\op \to \Sp$.
Therefore there is another coalgebra, linearly spanned by the (possibly empty) $H$-structures themselves. The free algebra on this coalgebra is therefore the bialgebra $A$ linearly spanned by the families of $H$-structures.
So now we have two different bialgebra structures on closely related vector spaces, and the two share the same multiplication.
The main result of this section relates these two structures.
\begin{prop}\label{prop:comodulebialg}
  The hereditary-species bialgebra $B$ coacts on the restriction-species bialgebra $A$, so as to make $A$ a left comodule bialgebra over $B$.
\end{prop}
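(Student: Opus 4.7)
The plan is to verify the two characteristic axioms of a left comodule bialgebra: that the coaction $\rho\colon A \to B \tensor A$ is both a morphism of algebras and a morphism of coalgebras (in the appropriately twisted tensor-coalgebra sense on $B \tensor A$). The coaction itself is in hand from Lemma~\ref{Hcomodule}, induced by the culf map $f\colon \ds{M} \to \ds{H}$ that takes fibres.

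\textbf{Comodule algebra axiom.} Both multiplications (on $A$ and on $B$) come from disjoint union of families, corresponding at the groupoid level to the symmetric monoidal category monad $\Smonad$. Since $f$ is defined pointwise by taking fibres, an operation that commutes with disjoint union, it is a map of monoidal decomposition spaces. The induced coaction $\rho$ is therefore an algebra homomorphism; unit compatibility (the empty family coacting to $1 \tensor 1$) is automatic.

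\textbf{Comodule coalgebra axiom.} This is the core content. On an $H$-structure $G$ over $V$, the composite $(\id_B \tensor \Delta_A) \circ \rho$ sums over pairs $(\pi, W)$, where $\pi \in \Pi(V)$ and $W \subseteq \pi$ is a union of blocks, yielding the term
\[
    G|\pi \,\tensor\, (G/\pi)|W \,\tensor\, (G/\pi)|(\pi \setminus W).
\]
The composite $(\mu_B \tensor \id \tensor \id) \circ (\id \tensor t \tensor \id) \circ (\rho \tensor \rho) \circ \Delta_A$ (with $t$ the swap) sums instead over triples $(U, \sigma, \tau)$ with $U \subseteq V$, $\sigma \in \Pi(U)$, $\tau \in \Pi(V \setminus U)$, yielding
\[
    \bigl(G|U|\sigma \,\sqcup\, G|(V{\setminus}U)|\tau\bigr) \,\tensor\, (G|U)/\sigma \,\tensor\, G|(V{\setminus}U)/\tau.
\]
The assignment $(\pi, W) \leftrightarrow \bigl(\bigcup_{B \in W} B,\; \pi|_{\bigcup W},\; \pi|_{V \setminus \bigcup W}\bigr)$ is a bijection matching the two index sets — the condition that $W$ be a union of blocks corresponds precisely to $\pi$ restricting compatibly to $U$ and its complement — and the Schmitt identities of Proposition~\ref{schmittidentities} identify the corresponding summands, block by block.

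The main obstacle I anticipate is to express the comodule coalgebra axiom purely at the simplicial-groupoid level as a Beck--Chevalley equivalence of spans in $\Grpd$, rather than by manipulating formal sums after homotopy cardinality. The cleanest presentation would exhibit both sides as induced by two composable spans, and verify their equivalence via a single pullback rectangle whose commutativity encodes the bijection above. The required coherences are then precisely the three functorialities of a hereditary species (covariance in surjections, contravariance in injections, and their Beck--Chevalley compatibility) that already underpin the constructions of $\ds{H}$ and $\ds{M}$; once assembled in this form, the proof should reduce to repeated application of Lemma~\ref{prismlemma}.
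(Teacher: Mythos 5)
Your combinatorial verification of the coalgebra axiom is correct, but your route is genuinely different from the paper's. You compute both composites as formal sums after taking homotopy cardinality, exhibit the bijection of index sets $(\pi,W)\leftrightarrow(U,\sigma,\tau)$, and match summands via Proposition~\ref{schmittidentities}; this is a legitimate proof of the proposition as stated, since $A$ and $B$ are the numerical incidence bialgebras, and it makes the combinatorial content transparent. The paper instead works entirely at the objective (simplicial-groupoid) level: it first isolates the multiplicative part into a general lemma (Lemma~\ref{freelemma}: the free algebra on a $B$-comodule coalgebra is a $B$-comodule bialgebra), reducing everything to showing that the restriction-species \emph{coalgebra} is a $B$-comodule coalgebra; it then realises both composites as composites of spans, introduces the explicit groupoid $X_3$ of chains $V\surj P\to\underline{2}$ with an $H$-structure on $V$, and checks via Lemma~\ref{fibreslemma} that the two relevant squares are pullbacks. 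That pullback verification is precisely the categorified form of your index-set bijection, and it buys the statement before cardinality is taken, with all automorphism bookkeeping handled automatically. What you flag as the ``main obstacle'' --- assembling the argument as an equivalence of spans via Beck--Chevalley and Lemma~\ref{prismlemma} --- is exactly what the paper's proof consists of, so in your write-up this step is deferred rather than done; it is not needed for the numerical statement, but you should say explicitly that you are proving the identity of linear maps on basis elements and not the objective statement.

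Two concrete omissions. First, the comodule-coalgebra axiom has a counit half: you must also check that $\varepsilon_A$ is a comodule map, i.e.\ $(\id_B\tensor\varepsilon_A)\circ\rho=\eta_B\circ\varepsilon_A$. This is easy ($\varepsilon_A(G/\pi)$ vanishes unless $\pi=\emptyset$, forcing $V=\emptyset$ and $G|\pi$ to be the empty family $1_B$), but the paper checks it with a separate pullback diagram and your proposal does not mention it. Second, in the algebra half you assert that $\rho$ is an algebra map because taking fibres commutes with disjoint union; this is essentially correct, but the clean statement is that the coaction on families is \emph{defined} by multiplicative extension using $\mu_B$ (as in Lemma~\ref{freelemma}), and one should note that the bialgebra axioms of $B$ are what make this extension a valid coaction. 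Also, a small slip of phrasing: in your indexing, $W$ is simply an arbitrary subset of the quotient set $\pi$ (every such subset is a union of blocks of $V$), so there is no ``condition'' on $W$ to match on the other side --- the bijection is unconditional.
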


The proof is a nice illustration of the objective method: after  unpacking the definitions, the proof consists in computing a  few pullbacks. Let us first recall some definitions and set notation.

\paragraph{Hereditary species and decomposition spaces}
Given a hereditary species $H\colon \Sp \to \Grpd$, we get a decomposition space $\ds{H}$ where an $n$-simplex is a family of $(n{-}1)$ composable surjections between non-empty finite sets, with a $H$-structure on each source set.
The comultiplication
$\Grpd_{/\ds{H}_1} \to  \Grpd_{/\ds{H}_1 \times \ds{H}_1}$ 
is given by the span
\[
    \ds{H}_1 \xleftarrow{d_1} \ds{H}_2 \xrightarrow{(d_2, d_0)} \ds{H}_1 \times \ds{H}_1,
\]
where $d_1$ returns the family of source sets, $d_0$ returns the family of target sets, and $d_2$ returns the family of families of fibres over each element of the target sets.
Let $B$ denote the homotopy cardinality, i.e.~the numerical incidence bialgebra of $\ds{H}$.

\paragraph{Restriction species and decomposition spaces}
Since $\I\op$ is a subcategory of $\Sp$, every hereditary  species $H$ induces a restriction species $\colon \I\op \to \Grpd$. Since the hereditary species $H$ is fixed throughout this section, we denote the underlying restriction species simply by $R$.
Recall from~\cite{GKT:restrict} that every restriction species $R$ induces a decomposition space $\ds{R}$  where an $n$-simplex is an $n$-layered set with an $R$-structure on the underlying set.
The comultiplication 
$\Delta\colon \Grpd_{/\ds{R}_1} \to  \Grpd_{/\ds{R}_1 \times \ds{R}_1}$ 
is given by the span
\[
    \ds{R}_1 \xleftarrow{d_1} \ds{R}_2 \xrightarrow{(d_2,d_0)} \ds{R}_1 \times \ds{R}_1,
\]
where $d_1$ joins the two layers of the $2$-simplex, and $d_2$ and $d_0$ return the first and second layers respectively.
Note that $\ds{R}_1 = \ds{M}_0$ and by Lemma~\ref{Hcomodule} the slice category $\Grpd_{/\ds{R}_1}$ is a left $\Grpd_{/\ds{H}_1}$-comodule.

\paragraph{Comodule bialgebra}
For background on comodule bialgebras, see \cite[\S 3.2]{Abe}.
Let $B$ be a bialgebra.  Recall that a \emph{(left) $B$-comodule  bialgebra} is a bialgebra object in the braided monoidal  category of left $B$-comodules.  
For any coalgebra $B$ we have the category of left $B$-comodules. A left $B$-comodule is a vector space $M$ equipped  with a coaction $\gamma\colon M \to B \tensor M$ satisfying the usual axioms.
So far only the coalgebra structure of $B$ is needed.
The algebra structure of $B$ comes in to provide a (braided) monoidal structure on the category of left $B$-comodules.
It is given as follows. If $M$ and $N$ are left $B$-comodules, then $M\tensor N$ is given a left $B$-comodule structure by the composite map
\[
  M \tensor N \to B \tensor M \tensor B \tensor N 
    \stackrel{\omega}\to B \tensor M \tensor N,
\]
where the map $\omega$ is given by first swapping the two middle  tensor factors, and then using the multiplication of $B$ in the two now adjacent $B$-factors. 
It follows from the bialgebra axioms that this is a valid left $B$-coalgebra structure.
This defines the monoidal structure on the category of left  $B$-comodules.  The unit object for this monoidal structure is the $B$-comodule $\Q$ (with structure map the unit of $B$).
It is easy to check that the underlying braiding of the category of vector spaces lifts to a braiding on this monoidal structure.

We now have a braided monoidal structure on the category of left $B$-comodules, and it makes sense to consider bialgebras in here. For reference, let us recall that a bialgebra in the braided monoidal category of left $B$-comodules is a $B$-comodule $M$ together with structure maps
\begin{xalignat*}{2}
\Delta_M\colon M &\to M \tensor M  & \varepsilon_M\colon M &\to \Q \\
\mu_M\colon M \tensor M &\to M & \eta_M\colon \Q &\to M
\end{xalignat*}
which are all required to be $B$-comodule maps and to satisfy the usual bialgebra axioms.
We shall be concerned in particular with the requirement that
$\Delta$ and $\varepsilon$ be $B$-comodule maps:
\begin{center}
\begin{tikzcd}
M \ar[r, "\Delta_M"] \ar[dd, "\gamma"'] & M \tensor M \ar[d, "\gamma \tensor 
\gamma"] \\
& B \tensor M \tensor B \tensor M \ar[d, "\omega"] \\
B \tensor M \ar[r, "B \tensor \Delta_M"'] & B \tensor M \tensor M
\end{tikzcd}
\qquad
\begin{tikzcd}
M \ar[d, "\gamma"'] \ar[r, "\varepsilon_M"] & \Q \ar[d, "\eta_B"] \\
B \tensor M \ar[r, "B\tensor \varepsilon"'] & B 
\end{tikzcd}
\end{center}




To simplify the proof of Proposition~\ref{prop:comodulebialg}, we shall invoke the following general result.
\begin{lemma}\label{freelemma}
  If $M$ is a comodule coalgebra over $B$, then the free algebra $\SSS M$ is naturally a comodule bialgebra over $B$.
\end{lemma}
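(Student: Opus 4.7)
The plan is to invoke the universal property of the free commutative algebra $\SSS M$: any linear map from $M$ into an algebra $A$ extends uniquely to an algebra homomorphism $\SSS M \to A$. Writing $\iota\colon M \to \SSS M$ for the universal inclusion, I would construct the structure maps on $\SSS M$ as follows. The multiplication $\mu$ and unit $\eta$ are the free algebra structure. The comultiplication $\Delta\colon \SSS M \to \SSS M \tensor \SSS M$ is the unique algebra homomorphism extending $(\iota \tensor \iota) \circ \Delta_M$; the counit $\varepsilon\colon \SSS M \to \Q$ is the algebra homomorphism extending $\varepsilon_M$; and the coaction $\gamma\colon \SSS M \to B \tensor \SSS M$ is the algebra homomorphism extending $(\id_B \tensor \iota) \circ \gamma_M$, where $B \tensor \SSS M$ carries the tensor-product algebra structure coming from the bialgebra $B$ and the algebra $\SSS M$. (This is where $B$ being a \emph{bialgebra} and not merely a coalgebra is used, since we need $B \tensor \SSS M$ to be an algebra.)

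Next I would verify the axioms not involving bialgebra compatibility: coassociativity of $\Delta$, the counit axioms for $\varepsilon$, and the $B$-comodule axioms for $\gamma$. Each of these is an equality of algebra homomorphisms out of $\SSS M$, so by the uniqueness part of the universal property it suffices to check the equality on the generating subspace $M \subset \SSS M$, where it reduces precisely to the hypothesis that $M$ is a counital coassociative coalgebra and a $B$-comodule.

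Finally I would verify the four comodule bialgebra conditions. That $\mu$ and $\eta$ are $B$-comodule maps is built into the definition: demanding $\gamma$ to be an algebra homomorphism into the tensor-product algebra $B \tensor \SSS M$ is exactly the compatibility of $\gamma$ with multiplication and unit. That $\Delta$ and $\varepsilon$ are $B$-comodule maps amounts to the two square diagrams displayed just before the lemma; in each square both sides are algebra homomorphisms from $\SSS M$, so by uniqueness it suffices to check commutativity on $M$, where the required equalities reduce exactly to the hypothesis that $\Delta_M$ and $\varepsilon_M$ are $B$-comodule maps. I expect the main obstacle to be bookkeeping rather than conceptual: one has to keep track of the braiding $\omega$ appearing in the comodule bialgebra square for $\Delta$, but this is exactly the symmetry of vector spaces that already governs the tensor-product algebra structure on $B \tensor \SSS M$, so everything lines up automatically.
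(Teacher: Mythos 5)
Your construction coincides with the paper's: the paper defines $\overline{\gamma}$ and $\overline{\Delta}_M$ by ``extending multiplicatively'' via the oplax-monoidal structure of $\SSS$ (post-composing with $\mu_B$ for the coaction), which is exactly the unique algebra-homomorphism extension you invoke via the universal property, and it leaves the remaining verifications as ``straightforward to check.'' Your check-on-generators argument, using uniqueness of algebra maps out of $\SSS M$, is just a cleaner way of carrying out precisely those verifications, so the two proofs are essentially the same.
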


\begin{proof}
  If $\gamma\colon M \to B \tensor M$ is the coaction for $M$, then the new coaction
  $\overline \gamma \colon \SSS M \to B \otimes \SSS M$ is given by extending
  multiplicatively, and using the algebra
  structure of $B$:
  \[
  \SSS M \xrightarrow{\SSS(\gamma)} \SSS(B \otimes M) \longrightarrow \SSS B
  \otimes \SSS M \xrightarrow{\mu_B \otimes \Id_{\SSS M}} B \otimes \SSS M .
  \]
  Here the middle map is the oplax-monoidal structure of $\SSS$.  If $\Delta_M\colon M \to M \otimes M$ is the comultiplication of $M$, then the new comultiplication
  $\overline \Delta_M\colon \SSS M \to \SSS M
  \otimes \SSS M$ is given by extending multiplicatively in the usual way:
  \[
  \SSS M \xrightarrow{\SSS(\Delta_M)} \SSS(M \otimes M) \longrightarrow \SSS M
  \otimes \SSS M .
  \]
  It is now straightforward to check that $\overline \Delta_M$ and the new free
  multiplication are $B$-comodule maps for $\overline \gamma$.
\end{proof}

Proposition~\ref{prop:comodulebialg} now follows from the following result, together with
Lemma~\ref{freelemma}. 

\begin{prop}
  Let $A$ be the incidence coalgebra of the ordinary restriction species underlying $H$. Then $A$ is naturally a left $B$-comodule coalgebra (where $B$ is the incidence bialgebra of the hereditary species as in Section~\ref{sec:hersp&ds}).
\end{prop}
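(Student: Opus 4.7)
The approach is to work at the objective (groupoid-slice) level throughout, where every structure map is induced by a span of groupoids and the comodule-coalgebra axioms reduce to equivalences of spans, which are in turn established by the Beck--Chevalley rule and explicit pullbacks. By Lemma~\ref{Hcomodule}, we already have the $B$-comodule structure on $A = \Grpd_{/\ds{R}_1} = \Grpd_{/\ds{M}_0}$ given by the span
\[
\ds{M}_0 \xleftarrow{d_1} \ds{M}_1 \xrightarrow{(f_1,d_0)} \ds{H}_1 \times \ds{M}_0,
\]
and the coalgebra structure $(\Delta_A, \varepsilon_A)$ comes from the restriction-species decomposition space $\ds{R}$ (noting $\ds{R}_1 = \ds{M}_0$). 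What remains is to verify that $\Delta_A\colon A \to A \tensor A$ and $\varepsilon_A\colon A \to \Q$ are $B$-comodule maps.

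For the comultiplication compatibility, the plan is to unpack each of the two composites $A \to B \tensor A \tensor A$ in the pentagon as a single span, by repeated application of Beck--Chevalley for linear functors on groupoid slices. Both apexes should classify the same data: an $R$-structured set $V$, a surjection $V \twoheadrightarrow P$, and a $2$-layering of $P$ (equivalently, a compatible $2$-layering of $V$). The key technical step is to exhibit a pullback
\begin{center}
\begin{tikzcd}
\ds{M}_1 \times_{\ds{M}_0} \ds{R}_2 \ar[r] \ar[d] \drpbk & \ds{R}_2 \ar[d, "(d_2{,}d_0)"] \\
\ds{M}_1 \times \ds{M}_1 \ar[r, "d_1 \times d_1"'] & \ds{R}_1 \times \ds{R}_1,
\end{tikzcd}
\end{center}
encoding the equivalence ``partition of $P$ $\leftrightarrow$ compatible partition of $V$''. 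The Beck--Chevalley law baked into the definition of a hereditary species is precisely what makes this square a pullback on the level of structures (restricting $R$ on $V$ agrees with restricting $H$ on each fiber). Additionally, one uses that the monoidal structure on $\ds{H}_1$ from $\Smonad$ makes concatenation of families of fibers of $W_0 \twoheadrightarrow P_0$ and $W_1 \twoheadrightarrow P_1$ agree with the family of fibers of the combined surjection $W_0 \sqcup W_1 \twoheadrightarrow P_0 \sqcup P_1$; this is what allows the multiplication $\mu_B$ appearing via $\omega$ on one side of the pentagon to be absorbed.

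The counit compatibility is easier and reduces to a single pullback expressing that the only surjection with empty target is the empty one, compatible with $\varepsilon_A$ which projects onto the empty layer.

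I expect the main obstacle to lie in the first compatibility: there is real bookkeeping to carry out, aligning the $\Smonad$/monoidal structure on $B$ in parallel with the layering data on $A$, and verifying that all restricted and quotient $R$- and $H$-structures match under the exhibited equivalence. Every such check, however, is ultimately a consequence of the Beck--Chevalley identity in the definition of hereditary species, together with the pullback definition of the $\ds{H}$- and $\ds{M}$-simplices over $\ds{S}$ and $\fatnerve \Sur$ respectively; no conceptual input beyond what has already been assembled in Section~\ref{sec:hersp&ds} should be required.
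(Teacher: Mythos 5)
Your proposal follows essentially the same route as the paper: reduce to showing that $\Delta_A$ and $\varepsilon_A$ are $B$-comodule maps at the level of spans, exhibit the common apex classifying a surjection $V \surj P$ with $H$-structure together with a $2$-layering of $P$ (the paper's groupoid $X_3$), and verify the two key pullbacks --- your displayed square against $(d_2,d_0)$ is precisely the paper's top right-hand pullback, your remarks on absorbing $\mu_B$ via the $\Smonad$-monoidal structure and on Beck--Chevalley match the paper's commutativity checks, and your counit observation (only the empty surjection has empty target) is exactly the paper's argument. One small notational caution: the fibre product $\ds{M}_1 \times_{\ds{M}_0} \ds{R}_2$ must be taken over $d_0\colon \ds{M}_1 \to \ds{M}_0$ (the target $P$) and $d_1\colon \ds{R}_2 \to \ds{R}_1$, not over the source, exactly as your prose description of the apex indicates.
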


\begin{proof} 
The underlying vector space of $A$ is the homotopy cardinality of the comodule of Lemma~\ref{Hcomodule}.
It remains to check that the structure maps $\Delta$ and $\varepsilon$ of the incidence coalgebra of the ordinary restriction species are $B$-comodule maps. We need to check that the two above squares are commutative.

The composition of comultiplications is given by composition of spans. We need to exhibit a commutative diagram as follows, such that the bottom left-hand square, and the top right-hand squares are pullbacks:
\begin{center}
  \begin{tikzcd}[column sep=large]
    \ds{R}_1
      & \ds{R}_2 \ar[l, "d_1"'] 
                              \ar[r, "{(d_2,d_0)}"]
      & \ds{R}_1 \times \ds{R}_1\\
      & & \\
    \ds{M}_1 \ar[uu, "d_1"] 
                          \ar[dd, "{(f, d_0)}"'] 
      & X_3 \ar[l, dashed, "\bar d_2"] 
                              \ar[dd, dashed, "{(g,\bar d_0)}"] 
                              \ar[r, dashed, "{\bar d_3}"'] 
                              \ar[uu, dashed, "\bar d_1"]
                              \ar[uur, phantom, "\urcorner", very near start]
                              \ar[ddl, phantom, "\llcorner", very near start]
      & \ds{M}_1 \times \ds{M}_1 \ar[uu, "d_1 \otimes d_1"'] 
                                          \ar[d, ""]\\
                          & & \ds{H}_1 \times \ds{R}_1 \times \ds{H}_1 \times \ds{R}_1 \ar[d, ""]\\
      \ds{H}_1 \times \ds{R}_1
       &  \ds{H}_1 \times \ds{R}_2 \ar[l, "{\id \otimes d_1 }"]  \ar[r, "\id \otimes {(d_2, d_0)}"']
       & \ds{H}_1 \times \ds{R}_1 \times \ds{R}_1.
  \end{tikzcd}
\end{center}
The objects of $\ds{R}_1$ are $H$-structures.
The groupoid $X_3$ consists of pairs of composable maps $V \surj P \to 2$, such that the first one is a surjection, and with a $H$-structure on $V$.
The map $\overline{d}_0$ sends $(V \surj P \to \underline{2})$ to $(P \to \underline{2})$, the map $\overline{d}_1$ sends it to $(V \to \underline{2})$, the map $\overline{d}_2$ to $(V \surj P)$, the map $\overline{d}_3$ to the pair of surjections between the fibres $(V_1 \surj P_1, V_2 \surj P_2)$, and the map $g$ to the family $\{V_i\}_{i \in P}$ of fibres of the surjection $V \surj P$ over all the elements of $P$.

Recall that objects of $\ds{R}_2$ are maps of sets $V \to \underline{2}$, with a $H$-structure on the source.
Objects of $\ds{H}_1$ are surjections with a $H$-structure on the source.

It is straightforward to verify the four squares are commutative, using the functoriality of $H$ and the Beck-Chevalley rule.
The structure on $\ds{H}_1 \times \ds{R}_1 \times \ds{R}_1$ is obtained as follow. 
On $\ds{H}_1$, the structure is given by restriction on the fibres, and the different paths give equivalent output since $H$ is contravariantly functorial in injections.
For the two $\ds{R}_1$, the structure is given by quotient (functoriality in surjections) then restriction taking the left then down path, or by restriction then quotient taking the top then right path; this gives equivalent output by the Beck-Chevalley rule.

The lower left-hand square is a pullback. Indeed after projecting away $\ds{H}_1$, it is enough to verify, by Lemma~\ref{prismlemma}, that the bottom square of the following diagram is a pullback:
\begin{center}
    \begin{tikzcd}[column sep=large]
        \ds{M}_1 \ar[d, "{(f,d_0)}"] 
           \ar[dd, bend right=50, "d_0"']
            & X_3 \ar[l, "\overline d_2"'] \ar[d, "{(g,\overline{d}_0)}"'] 
            \ar[dd, bend left=50, "\overline{d}_0"]
\\
       \ds{H}_1 \times \ds{R}_1 \ar[d] & \ds{H}_1 \times \ds{R}_2 \ar[d] \ar[l, "\id \times d_1"]  \\  
        \ds{R}_1  & \ds{R}_2. \ar[l, "d_1"] 
    \end{tikzcd}
\end{center} 
The fibre of $\overline d_0$ over a element $P \to \underline{2}$ of $\ds{R}_2$, or the fibre of $\overline d_0$ over the element $d_1(P \to \underline{2}) = P$ consist both of pairs with a surjection onto $P$: $V \surj P$ and the map $P \to \underline{2}$. Thus by Lemma~\ref{fibreslemma}, the bottom square is a pullback

The top right-hand square is also a pullback, using Lemma~\ref{fibreslemma} one more time:
the fibre of $\overline d_1$ over a object $V \to \underline{2}$ of $\ds{R}_2$ consists of pairs of composable maps, where the first one is a surjection from $V$, the second one is a map to $\underline{2}$, such that the composition is $V \to \underline{2}$. 
The fibre of $\overline d_1$ over the object $(d_2,d_0)(V \to \underline{2})$ is a pair of surjections with sources $V_1$ and $V_2$. This is equivalent to the fibre of $\overline d_0$ over $P \to \underline{2}$ since we can take the disjoint union of $V_1 \surj P_1$ and $V_2 \surj P_2$ to get a surjection, and we obtain a map to $\underline{2}$ by sending elements of $P_1$ to $1$, and elements of $P_2$ to $2$.

For the counit condition, it is easy to verify that the following diagram is commutative and the two marked squares are pullbacks
\begin{center}
  \begin{tikzcd}[sep=large]
    \ds{R}_1
          & \ds{R}_0 \ar[l, "s_0"'] 
                     \ar[r, ""]
          & 1\\
    \ds{M}_1 \ar[u, "d_1"] 
        \ar[d, "{(f, d_0)}"'] 
          & \ds{R}_0 \ar[l, dashed, ""] 
              \ar[d, dashed, ""] 
              \ar[r, dashed, ""'] 
              \ar[u, dashed, ""]
              \ar[ur, phantom, "\urcorner", very near start]
                              \dlpbk
          & 1 \ar[u, ""'] 
              \ar[d, ""]\\
    \ds{H}_1 \times \ds{R}_1
          &  \ds{H}_1 \times \ds{R}_0 
                \ar[l, "{\id \otimes s_0}"] 
                \ar[r, ""']
          & \ds{H}_1.
  \end{tikzcd}
\end{center}
Indeed, the bottom left pullback is the groupoid of surjections $V \surj P$ with a $H$-structure on $V$, such that the induced $H$-structure on $P$ is an empty $H$-structure. This implies that both $P$ and $V$ are empty, so it is just any $H$-structure on the empty set, which is $\ds{R}_0$.

\end{proof}

\begin{ex}
  The hereditary species of simple graphs, described in Example~\ref{graphs}, induces a comodule bialgebra. The first comultiplication is given by the hereditary structure as in Section~\ref{hercoalgebra}. The secondary comultiplication is given by plain restriction species structure. By Proposition~\ref{prop:comodulebialg}, this is a comodule bialgebra. It has been studied deeply by Foissy~\cite{Foissy:bialgebragraphs}. It is interesting here to see this example as an instance of a general construction.
\end{ex}


\section{Hereditary species and operadic categories}
\label{sec:operadic}


As we have seen, hereditary species are operad-like without being operads, in the sense that they admit a kind of two-sided bar construction, which is not in general a Segal space. A relationship between operadic categories and decomposition spaces was established recently be Garner, Kock, and Weber~\cite{GKW}. They show that certain unary operadic categories are decomposition spaces. The following construction shows that certain \emph{non-unary} operadic categories are decomposition spaces, namely those that come from hereditary species. 

Since we are going to verify the axioms of operadic category in detail,
we list them here, following the formulation of
\cite{GKW}.

\bigskip

Let $\F$ denote the category whose objects are the sets 
$n = \{1, \dots, n\}$ for $n \in \N$ and whose maps are arbitrary functions. We denote by $1 \in 1$ the unique element in the terminal object. In any category with terminal object $1$, we write 
$\tau_X\colon X \to 1$ for the unique map from an object to the terminal.

Given a function
$\varphi\colon  m \to  n$ in $\F$ and
$i \in  n$, there is a unique monotone injection
\begin{equation}\label{eq:eps}
  \varepsilon_{\varphi, i}\colon \varphi^{-1}(i) \rightarrowtail m
\end{equation}
in $\F$ whose image is $\{\,j \in  m\colon \varphi(j) = i\,\}$;
the object $\varphi^{-1}(i)$ is called the \emph{fibre of $\varphi$ at $i$}. Often the map $\varphi$ is clear from the context, and we write simply 
\[
    \varepsilon_i\colon m_i \rightarrowtail m.
\]
If we are given two maps in $\F$, 
$\ell \stackrel{\psi}\to m \stackrel{\varphi}\to n$, then we denote by $\psi^\varphi_{i}$ the unique map comparing the fibres, given by the universal property of pullback:
\begin{equation}\label{pbkeps}
	\begin{tikzcd}
	  \ell_i \drpbk \ar[d, rightarrowtail, "\varepsilon_{i}"'] 
	  \ar[r, "\psi^\varphi_{i}"] & m_i \drpbk 
	  \ar[d, rightarrowtail, , "\varepsilon_{i}"'] \ar[r] 
	  & \{i\} \ar[d, rightarrowtail]\\ 
	  \ell \ar[r, "\psi"'] & m \ar[r, "\varphi"'] & n
	\end{tikzcd}
\end{equation}
and call it the \emph{fibre map of $\psi$ with respect to $\varphi$ at $i$}. 
We usually omit the $\psi$-decoration.

\paragraph{Operadic categories}\label{def:operadic-category}
  An \emph{operadic category}~\cite{BM} is given by the following data:
\begin{enumerate}[label=(D{\arabic*})]
    \item \label{data:operadic-Q1} a category $\C$ endowed with chosen local terminal objects (i.e.~a chosen terminal object in each connected component);
    \item \label{data:operadic-Q2} a \emph{cardinality functor} $\card{\thg}\colon \C \to \F$;
    \item \label{data:operadic-Q3} for each object $X\in \C$ and each $i \in \card{X}$ a \emph{fibre functor}
      \[
        \fibre_{X,i} : \C/X \to \C
      \]
      whose action on objects and morphisms we denote as follows:
      \begin{align*}
        \cd[@C1.3em]{
          Y \ar[rr]^-{f} && X
        } \qquad &\mapsto \qquad f^{-1}(i)\\
        \cd[@C1em@R-0.7em]{{Z} \ar[rr]^-{g} \ar[dr]_-{fg} & &
          {Y} \ar[dl]^-{f} \\ &
          {X}} \qquad &\mapsto \qquad
        g^f_{i}\colon (fg)^{-1}(i) \to f^{-1}(i)\rlap{ ,}
      \end{align*}
      referring to the object $f^{-1}(i)$ as the \emph{fibre
      of $f$ at $i$}, and the morphism\\
    $g^f_{i}\colon (fg)^{-1}(i) \to f^{-1}(i)$ as the \emph{fibre
      map of $g$ with respect to $f$ at~$i$};
  \end{enumerate}
  all subject to the following axioms, where  in~\ref{axQ:BM-fibres-of-local-fibres}, we write $\varepsilon j$ for the image of $j \in {\card f}^{-1}(i)$ under the map $\varepsilon_{\card f, i}\colon {\card f}^{-1}(i) \to \card Y$ of Equation~\eqref{eq:eps}:

  \begin{enumerate}[label=(A{\arabic*})]
  \item \label{axQ:BM-abs(lt)} if $X$ is a local terminal then
    $\card{X}= 1$;
  \item \label{axQ:BM-fibres-of-identities} for all $X \in \C$ and
    $i \in \card X$, the object $(\id_X)^{-1}(i)$ is chosen terminal;
  \item \label{axQ:BM-67} for all $f \in \C / X$ and $i \in \card X$,
    we have $\card{\smash{f^{-1}(i)}} = {\card f}^{-1}(i)$, while for
    all $g\colon fg \to f$ in $\C / X$ and $i \in \card X$,
    we have $\card{\smash{g^f_{i}}} = \smash{\card{g}^{\card f}_{i}}$;
  \item \label{axQ:BM-fibres-of-tau-maps} for $Y \in \C$, we have
    $\tau_Y^{-1}(1) = Y$, and for $g \colon Z \to Y$, we have
    $g^{\tau_Y}_1 = g$;
  \item \label{axQ:BM-fibres-of-local-fibres} for
    $g \colon fg \to f$ in $\C/X$, $i \in \card X$ and
    $j \in \card{f}^{-1}(i)$, we have that
    $(g^f_i)^{-1}(j) = g^{-1}(\varepsilon j)$, and given also
    $h \colon fgh \to fg$ in $\C / X$, we have
    $(h^{fg}_i)^{g^f_i}_{j} = h^g_{\varepsilon j}$.
  \end{enumerate}

\begin{example}
  The terminal operadic category is the category $\F$ of finite ordered sets and arbitrary maps. The cardinality functor is the identity, the fibres are the `true' fibres (as in Equation~\eqref{eq:eps}).
\end{example}

\begin{example}
  For the present purposes the key example is the category $\Sur_{\text{ord}}$ of finite ordinals and arbitrary surjections. 
The cardinality functor is the inclusion functor $\Sur_{\text{ord}} \to \F$.  The fibre functor is the same as that from $\F$, but note that these fibres are not true fibres in the strict sense of the word, because they are not given by pullback.  Indeed, the category of surjections does not have pullbacks.  And the `inclusion of a fibre' is not a morphism in the category.  It is important nevertheless that many constructions with surjections can be interpreted as taking place in $\F$. 
  The axioms are easily verified.
\end{example}

\paragraph{The construction}

We now work with $\Set$-valued species as in the  classical theory.  This is needed to achieve the strictness  characteristic for operadic categories.  We also need to assume that the hereditary species have the property that $H[1] = 1$. Schmitt~\cite{Schmitt} calls such hereditary species \emph{simple}. This is true for example for simple graphs.
  
  Given a simple hereditary species $H\colon \Sp \to \Set$, we consider first its Grothendieck construction. It is a left fibration (discrete opfibration) $\int H \to \Sp$.
  The objects of $\int H$ are pairs $(n,x)$ where $n\in \Sp$ and $x\in H[n]$. We will denote such an object $X$. The morphisms in $\int H$ are described in the usual way.  They have an underlying span as in $\Sp$. We are interested in a subcategory, namely the subcategory obtained by pullback along the inclusion $\Sur_\text{ord} \to \Sp$ (from the category of finite ordinals and genuine surjections, not all partial surjections). We denote this category by $\mathcal H$.  Its objects are $X=(n,x)$ as before, and an arrow from $Y=(m,y)$ to $X=(n,x)$ is given by a genuine surjection $s\colon m \surj n$ such that $H[s](y)=x$.

We now work towards equipping $\mathcal{H}$ with the structure of operadic category.
The category $\mathcal{H}$ is clearly connected.  So to choose local terminal objects is to choose a global terminal object. By our assumption $H[1]=1$, there is a unique such, namely $(1,1)$, easily seen to be terminal in $\mathcal{H}$.

We define the cardinality functor to be the composite functor $\mathcal{H} \to \Sur_{\text{ord}} \to \F$.

We define the fibre functor, for each $X = (n,x) \in \mathcal{H}$ and each $i\in n$, to be the assignment
  \begin{eqnarray*}
    \mathcal{H}_{/X} & \longrightarrow & \mathcal{H}  \\
     f\colon Y \to X &   \longmapsto   & Y_i := (\card{f}^{-1}(i), 
	H[\varepsilon_i](y)).
  \end{eqnarray*}
Since this will be needed in all the checks, let us spell this out in   more detail.  We assume $X=(n,x)$ and $Y=(m,y)$, and the morphism $f\colon Y \to X$ is given by an underlying surjection $\card{f}\colon m \surj n$ such that $H[\card{f}](y)=x$.  The fibre $Y_i$ is defined to be the pair $(m_i,y_i)$, where $m_i = \card{f}^{-1}(i)$ is the fibre of the map in $\F$:
\begin{center}
  \begin{tikzcd}
    m_i \drpbk \ar[d, tail, "\varepsilon_i"'] \ar[r, two heads] & \{i\} \ar[d, tail]\\ 
    m \ar[r, two heads, "\card{f}"'] & n 
  \end{tikzcd}
\end{center}
  and $y_i$ is defined as $y_i = H[\varepsilon_i](y)$, the restriction of the $H$-structure $y$ along the injection $\varepsilon_i \colon m_i \rightarrowtail m$.
  
  We must also provide the assignment on arrows.  So given 
  \[
  Z \stackrel{g}\to  Y \stackrel{f}\to X
  \]
  considered as a morphism in $\mathcal{H}_{/X}$ from $fg$ to $f$, we need to
  provide a morphism
  \[
  Z_i \to Y_i.
  \]
  If we let $Z=(\ell,z)$ then we have $H[\card{g}](z) = y$. The morphism must be constituted by a surjection $g_i\colon \ell_i \surj m_i$, such that $H[g_i] (z_i) = y_i$, where $z_i = H[\varepsilon_i](z)$ is the point in $H[\ell_i]$ representing $Z_i$ (that is, restriction of the $H$-structure $z$ along the injection $\varepsilon_i \colon \ell_i \rightarrowtail \ell$), and $y_i =  H[\varepsilon_i](y)$ is the point in $H[m_i]$ representing $Y_i$.
  For the surjection $g_i \colon \ell_i \surj m_i$ to be valid, we need to check that $H[g_i] (z_i) = y_i$.  But this is precisely the pull-push formula for hereditary species on the pullback square from \eqref{pbkeps}:
  \begin{center}
    \begin{tikzcd}
	  \ell_i \drpbk \ar[r, two heads, "g_i"] \ar[d, tail, "\varepsilon_i"'] & m_i \ar[d, tail] \\
	  \ell \ar[r, two heads, "g"'] & m .
	  \end{tikzcd}
  \end{center}
(This shows that the assignment extends to arrows.  The check that this assignment on arrows respects composition and identity arrows is routine, and depends on transitivity of pullbacks in the skeletal category $\F$.)

  \bigskip
  
  We have now exhibited all the data required for an operadic category.
  
\begin{proposition}
  The structures on $\mathcal{H}$
  given above satisfy the axioms for an operadic 
  category.
\end{proposition}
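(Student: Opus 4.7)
The plan is to verify axioms (A1)--(A5) in order, observing that the underlying combinatorial structure on ordinals and surjections already satisfies every axiom (this is the operadic category $\Sur_{\text{ord}}$ from the previous example); what must be added is that the chosen $H$-structures on sources are correctly propagated by the fibre functor. So in each case the real content is a single identity of $H$-structures, which will follow from one of the three functorialities of a hereditary species (covariance in surjections, contravariance in injections, or the Beck--Chevalley compatibility).

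First I would dispatch the easy axioms. For (A1), the only local terminal is $(1,1)$, which has cardinality $1$ by definition. For (A2), an identity morphism $\id_X$ has underlying surjection $\id_n$, so its fibre at $i$ is the singleton $\{i\}$ with $H$-structure $H[\varepsilon_i](x)$; since the simplicity assumption gives $H[1]=1$, this is the terminal object $(1,1)$. For (A3), the equalities $\card{f^{-1}(i)} = \card{f}^{-1}(i)$ and $\card{g^f_i} = \card{g}^{\card f}_i$ hold by the very definition of the fibre functor, which is built on top of the fibres in $\F$. For (A4), the map $\tau_Y$ has underlying surjection $m\surj 1$, so its fibre at the unique element $1$ is all of $m$, and the relevant restriction is along $\varepsilon_1 = \id_m$, which by functoriality of $H$ acts as the identity; the statement $g^{\tau_Y}_1 = g$ is similarly immediate.

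The main content is (A5). Given morphisms $Z \xrightarrow{g} Y \xrightarrow{f} X$ in $\mathcal{H}$ with $X=(n,x)$, $Y=(m,y)$, $Z=(\ell,z)$, and fixing $i\in n$ and $j\in m_i$, one must show that the iterated fibre $(g^f_i)^{-1}(j)$ agrees with the single fibre $g^{-1}(\varepsilon_i j)$ as objects of $\mathcal{H}$, and that an analogous equality holds for the fibre maps when a further $h$ is introduced. On the level of ordinals and surjections, this is just the transitivity of pullbacks in $\F$, so the underlying sets and surjections agree strictly because we are working in the skeletal $\F$. The $H$-structure on $(g^f_i)^{-1}(j)$ is obtained by two successive restrictions along injections, namely $H[\varepsilon_j]\circ H[\varepsilon_i](z)$, while the $H$-structure on $g^{-1}(\varepsilon_i j)$ is obtained by a single restriction along the composite injection; contravariant functoriality of $H$ in injections makes these equal. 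For the statement about fibre maps, which says that the fibre at $j$ of the fibre map $h^{fg}_i$ with respect to $g^f_i$ equals $h^g_{\varepsilon_i j}$, one has by (A3) that this reduces on underlying sets to an identity in $\F$, while on $H$-structures it again follows from the contravariant composition, together with the Beck--Chevalley identity applied to the composite pullback rectangle appearing in diagram~\eqref{pbkeps}.

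The main obstacle is the bookkeeping in (A5): one must keep straight the two different ways the $H$-structures can be produced---via restriction followed by restriction, versus restriction along a composite injection---and recognise each identity as an instance of contravariant functoriality or of Beck--Chevalley. Once the corresponding pullback rectangles in $\F$ are drawn, however, the verification is purely formal, and the proof amounts to assembling these observations.
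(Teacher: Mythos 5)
Your proof is correct and follows essentially the same route as the paper: the easy axioms are dispatched identically, and (A5) is reduced to transitivity of pullbacks in the skeletal $\F$ plus contravariant functoriality of $H$ in injections. The only cosmetic difference is in the second half of (A5), where the paper avoids re-examining $H$-structures by noting that the two fibre maps share the same underlying surjection and the same source, hence coincide because $\mathcal{H}\to\Sur_{\text{ord}}$ is a discrete opfibration---a slightly cleaner way to close the argument than your appeal to Beck--Chevalley, which the paper instead uses only to show the fibre maps are well defined.
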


Since this is a new class of operadic categories, not considered 
previously, and since the operadic category axioms can be a bit 
subtle, we include the details of the checks.

\begin{proof}
\begin{enumerate}[label=(A{\arabic*})]
  \item  The chosen terminal $(1,1)$ clearly has cardinality $1$.  

  \item We must check that all fibres of an identity map $\id_X \colon (n,x) \to (n,x)$ are the chosen terminal. By definition, for $i\in n$, the fibre is $(1,?)$ where $1$ is the $\F$-fibre of the identity $n \to n$, and $?$ can be no other than $1\in H[1]$.

    \item We need to compute the cardinality of a fibre $Y_i = (m_i, y_i)$ of a morphism $Y \to X$ and $i\in \card{X}=n$. But by construction this is $m_i$, the fibre of the underlying surjection $m \surj n$. We must also verify that for a triangle $Z \to Y \to X$, the cardinality of the fibre map (over $i\in \card{X}=n$) is the fibre map in $\F$. But this is clear from the definition of fibre map: it was defined to have as underlying surjection $\ell_i \surj m_i$, the fibre map in $\F$.
  
  \item We must check that for any object $Y=(m,y)$, the fibre of the unique map $(m,y)\to (1,1)$ has unique fibre $Y$. For the underlying map in $\F$ this is clear: the unique fibre of $m \surj 1$ is $m$. And the new point must be $H[\id](y)=y$, so altogether we find $Y$ again as required. We must also check that given $g\colon Z \to Y$ (given by $(\ell,z) \to (m,y)$), then the fibre map $g_1^\tau \colon Z_1 \to Y_1$ over the unique point in $(1,1)$ coincides with $g$ itself. On the $\F$-level, this is clear, as we get $\ell\surj m$ again. The points $z_i \in H[\ell]$ and $y_i \in H[m]$ are given, by construction of the fibre functor, by contravariant functoriality in the injections (fibre inclusions in $\F$) $\ell_i \rightarrowtail \ell$ and $m_i \rightarrowtail m$. But these are the identity maps, so $z_i=z$ and $y_i=y$ as required. 
  Note that axiom (A4) just says that the fibre functor $\fibre_{1,1} \colon \C_{/1} \to \C$ must coincide with the canonical projection functor.
    
  \item Given morphisms $Z\stackrel{g}\to Y \stackrel{f}\to X$ and elements $i\in \card{X}$ and $j \in \card{f}^{-1}(i) = m_i$, we need to establish that $(g^f_i)^{-1}(j) = g^{-1}(\varepsilon j)$.  In detail, if the objects and maps are given by
  \[
  (\ell,z) \stackrel{g}\longrightarrow (m,y) 
  \stackrel{f}\longrightarrow (n,x)
   \]
  and we have $i\in \card{X}=n$ and $j \in m_i$,  then we first form the diagram of pullbacks in $\F$:
  \begin{center}
	\begin{tikzcd}[sep=large]
	  (\ell_i)_j \drpbk \ar[r, twoheadrightarrow] \ar[d, 
	  rightarrowtail, "\varepsilon_j"'] 
	  \ar[dd, rightarrowtail, shift right = 8pt, bend right=42pt, "\varepsilon_{ij}"']
	  & \{j\} \ar[d, rightarrowtail] & \\
	  \ell_i \drpbk \ar[d, rightarrowtail, "\varepsilon_i"'] \ar[r, 
	  twoheadrightarrow, "\card{g}_i"] 
	  & m_i \drpbk \ar[d, rightarrowtail] \ar[r, twoheadrightarrow] 
	  & \{i\} \ar[d, rightarrowtail]\\ 
	  \ell \ar[r, twoheadrightarrow, "\card{g}"'] & m \ar[r, 
	  twoheadrightarrow, "\card{f}"'] & n.
	\end{tikzcd}
  \end{center}
  Note that the set $(\ell_i)_j$ has two interpretations: it is at the same time the fibre of $\card{g}$ over $\varepsilon j$, and the fibre of $\card{g}_i$ over $j$.  This shows that the two objects $(g^f_i)^{-1}(j)$ and $g^{-1}(\varepsilon j)$ have the same underlying set.  We just need to check their $H$-structures are the same. According to the definitions, the point in $(g_i^f)^{-1}(j)$  is given by $H[\varepsilon_{j}](z_i)$, where $z_i = H[\varepsilon_i](z)$.  On the other hand, the point in $g^{-1}(\varepsilon j)$ is given by $H[\varepsilon_{ij}](z)$. But these two are the same, by contravariant functoriality of $H$ in injections:
  \[
  H[\varepsilon_{j}](z_i) = 
  H[\varepsilon_{j}]\big(H[\varepsilon_i](z)\big) =
  H[\varepsilon_i \circ \varepsilon_j](z) = H[\varepsilon_{ij}](z) .
  \]
  
  For the second part of (A5), given morphisms $W \stackrel{h}\to Z\stackrel{g}\to Y \stackrel{f}\to X$ and elements $i\in \card{X}$ and $j \in \card{f}^{-1}(i) = m_i$, we need to establish that $(h^{fg}_i)^{g^f_i}_{j} = h^g_{\varepsilon j}$.
  These morphisms have the same source and target thanks to the first item in (A5). More precisely the second part of the axiom can be formulated as saying that this square commutes:
  \begin{center}
    \begin{tikzcd}
	  ((gh)_i^f)^{-1}(j) 
	  \ar[d, equal] 
	  \ar[r, "{(h_i^{fg})^{g_i^f}}"] 
	  &
	  (g_i^f)^{-1}(j) \ar[d, equal] \\
	  (gh)^{-1}(\varepsilon j) \ar[r, "h_{\varepsilon j}^g"'] & 
	  g^{-1}(\varepsilon j).
	\end{tikzcd}
  \end{center}
  Checking this is only a question of unpacking.  At the level of underlying sets, we have the pullback diagram
  \begin{center}
	\begin{tikzcd}[sep=large]
	  (k_i)_j \drpbk \ar[r, twoheadrightarrow] \ar[d, 
	  rightarrowtail, "\varepsilon_j"'] 
	  &
	  (\ell_i)_j \drpbk \ar[r, twoheadrightarrow] \ar[d, 
	  rightarrowtail, "\varepsilon_j"'] 
	  & \{j\} \ar[d, rightarrowtail] & \\
	  k_i \drpbk \ar[d, rightarrowtail, "\varepsilon_i"'] \ar[r, 
	  twoheadrightarrow, "\card{h}_i"]&
	  \ell_i \drpbk \ar[d, rightarrowtail, "\varepsilon_i"'] \ar[r, 
	  twoheadrightarrow, "\card{g}_i"] 
	  & m_i \drpbk \ar[d, rightarrowtail] \ar[r, twoheadrightarrow] 
	  & \{i\} \ar[d, rightarrowtail]\\ 
	  k \ar[r, twoheadrightarrow, "\card{h}"'] &
	  \ell \ar[r, twoheadrightarrow, "\card{g}"'] & m \ar[r, 
	  twoheadrightarrow, "\card{f}"'] & n.
	\end{tikzcd}
  \end{center}
  The point is that the surjection $(k_i)_j \surj (\ell_i)_j$ has two interpretations, namely as the $j$-fibre map of $\card{h}_i$ or as the $\varepsilon j$-fibre map of $\card{h}$.  But this is precisely to say that the two morphisms $(h^{fg}_i)^{g^f_i}_{j}$ and $h^g_{\varepsilon j}$ have the same underlying surjection.  But they also have the same source (and the same target), by the first part of A5. It follows that they agree, because the underlying map is a surjection and the projection $\mathcal{H} \to \Sur_\text{ord}$ is a discrete opfibration by construction.
\end{enumerate}
\end{proof}

\paragraph{Operadic functors}

  A functor $F\colon \C \to \D$ between operadic categories is called an \emph{operadic functor} if it strictly preserves local terminal objects, strictly commutes with the cardinality functors to $\F$, and preserves fibres and fibre maps in the sense that
  \begin{equation*}
    F(f^{-1}(i)) = (Ff)^{-1}(i) \qquad \text{and} \qquad F(g^f_{i}) =
    (Fg)^{Ff}_{i}
  \end{equation*}
  for all $g\colon fg \to f$ in $\C/X$ and $i \in \card X$.
  We denote by $\kat{OpCat}$ the category of operadic categories and operadic functors.

\begin{proposition}
  The construction given above is the object part of a functor $\kat{HSp}_{\operatorname{simple}} \to \kat{OpCat}$.
\end{proposition}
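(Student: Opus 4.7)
My plan is to define, for each morphism $\alpha \colon H \to K$ of simple hereditary species, an operadic functor $F_\alpha \colon \mathcal{H} \to \mathcal{K}$, and then to check functoriality of the assignment $\alpha \mapsto F_\alpha$. The natural definition is to send an object $(n,x)$ of $\mathcal{H}$ (with $x \in H[n]$) to $(n, \alpha_n(x))$ in $\mathcal{K}$, and to send an arrow with underlying surjection $s \colon m \surj n$ to the arrow with the same underlying surjection: this is a legitimate arrow in $\mathcal{K}$ because $K[s](\alpha_m(y)) = \alpha_n(H[s](y)) = \alpha_n(x)$ by naturality of $\alpha$ with respect to $s$. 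Since the assignment preserves the underlying surjections strictly, functoriality of $F_\alpha$ follows immediately from functoriality of $\alpha \mapsto \alpha_n$ componentwise, and composition and identities behave as required.

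The remaining work is to check that $F_\alpha$ is an \emph{operadic} functor. First, $F_\alpha$ sends the chosen terminal $(1,1)$ of $\mathcal{H}$ to $(1, \alpha_1(1))$, which equals $(1,1)$ in $\mathcal{K}$ because simplicity gives $K[1]=1$. Second, $F_\alpha$ commutes strictly with the cardinality functors to $\F$, since both cardinality functors are just the projection to the underlying ordinal and $F_\alpha$ preserves this data on the nose. Third, I must verify preservation of fibres: for $f \colon (m,y) \to (n,x)$ and $i \in n$, the fibre in $\mathcal{H}$ is $(m_i, H[\varepsilon_i](y))$, which $F_\alpha$ sends to $(m_i, \alpha_{m_i}(H[\varepsilon_i](y)))$; on the other hand, $(F_\alpha f)^{-1}(i) = (m_i, K[\varepsilon_i](\alpha_m(y)))$. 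These agree by naturality of $\alpha$ on the injection $\varepsilon_i \colon m_i \rightarrowtail m$. Fourth, preservation of fibre maps $g_i^f$ is automatic once one knows that the underlying surjection of $g_i^f$ is $\card{g}_i$ (established in the operadic category construction) and that $F_\alpha$ preserves underlying surjections.

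Finally, the assignment $\alpha \mapsto F_\alpha$ is itself functorial: the identity natural transformation $\id_H$ induces the identity functor on $\mathcal{H}$, and given $H \xrightarrow{\alpha} K \xrightarrow{\beta} L$, both $F_{\beta \alpha}$ and $F_\beta \circ F_\alpha$ act as $(n,x) \mapsto (n, \beta_n(\alpha_n(x)))$ on objects and as the identity on underlying surjections of morphisms. Hence they agree, yielding a functor $\kat{HSp}_{\operatorname{simple}} \to \kat{OpCat}$.

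The main obstacle, such as it is, is the bookkeeping around naturality of $\alpha$ with respect to the injections $\varepsilon_i$ (and its iterations $\varepsilon_{ij}$ appearing in axiom (A5)); but since these are all maps in $\Sp$ and $\alpha$ is a natural transformation of functors $\Sp \to \Set$, the required identities hold automatically, and no new Beck--Chevalley-type computation is needed beyond what was already absorbed into the construction of $\mathcal{H}$ itself.
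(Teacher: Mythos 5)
Your proposal is correct and follows essentially the same route as the paper: a morphism of hereditary species is viewed as a (morphism of) discrete opfibration(s), the induced functor preserves the chosen terminal and cardinality on the nose, fibres are preserved by naturality with respect to the inclusions $\varepsilon_i$, and fibre maps are preserved because they have the same underlying surjection and the same source, which determines them in a discrete opfibration. Your explicit check of functoriality of $\alpha \mapsto F_\alpha$ is a small addition the paper leaves implicit, but the substance of the argument is identical.
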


\begin{proof}
  A morphism of hereditary species is by definition a natural transformation $F\colon H' \Rightarrow H$, or equivalently a morphism  of discrete opfibrations over $\Sp$.
  Clearly this defines also a morphism $F\colon \mathcal{H}' \to \mathcal{H}$  of discrete opfibrations over $\Sur_\text{ord}$, and in particular a functor. We just need to check that this functor is operadic.  It is clear that it preserves the chosen terminal objects.  It is also clear that it is compatible with cardinality, since a morphism of discrete opfibrations over $\Sur_\text{ord}$ obviously induces a functor over $\F$.  To check compatibility with fibres, consider a morphism $f\colon (m,y) \to (n,x)$ in $\mathcal{H}'$ and an element $i\in n$.  The fibre over $i$ is by definition $(m_i, H'[\varepsilon_i](y))$, and applying $F$ to that gives
  \[
  F(f^{-1}(i)) = F \big( m_i, H'[\varepsilon_i](y)\big) 
  = \big( m_i, F(H'[\varepsilon_i](y) \big)
  = \big( m_i, H[\varepsilon_i](F(y)) \big),
  \]
  the last equality by naturality of $F$ with respect to the arrow  $\varepsilon_i \colon m_i \rightarrowtail m$.  But the last object is precisely the
  fibre of $F(f)$ over $i$, as required.
  
  We also have to show that given 
  $(\ell,z) \stackrel{g}\to (m,y) \stackrel{f}\to (n,x)$ 
  and an element $i\in n$, we have
  \[
    F( g_i^f ) = ( F(g))_i^{Ff} .
  \]
  This is well typed in view of the first part of the proof. More  precisely the assertion is that this diagram commutes:
  \begin{center}
    \begin{tikzcd}[column sep=large]
	  F(fg)^{-1}(i) 
	  \ar[d, equal] 
	  \ar[r, "F(g_i^{f})"] 
	  &
	  F(f^{-1}(i)) \ar[d, equal] \\
	  F(fg)^{-1}(i) \ar[r, "F(g)_i^{Ff}"'] & 
	  F(f)^{-1}(i).
	\end{tikzcd}
  \end{center}
  Checking this is only a question of unpacking.  Both morphisms have the same underlying surjection, namely $\card{F(g)}_i\colon \ell_i \surj m_i$.
  Since they also have the same source (and target), the fact that  $\mathcal{H} \to \Sur_\text{ord}$ is a discrete opfibration ensures that they are also equal as morphisms in $\mathcal{H}$, as required.
\end{proof}

\begin{example}
  Let $H\colon \Sp \to \Set$ be the hereditary species of simple graphs (see  Example~\ref{graphs}).  The associated operadic category is the category  $\mathcal{H}$ whose objects are simple graphs (with vertex set some  ordinal $n$), and whose morphisms are graph contractions $G \surj Q$.
  This means it is a surjective map on vertices, and also on edges, and edges are allowed to map to a vertex.  The chosen terminal graph is the one-vertex graph. The cardinality of a graph is the set of vertices, and the fibre of a contraction $G \surj Q$ over some vertex in $Q$ is the preimage of that vertex (the graph contracted onto the vertex).
\end{example}

\newpage


\address
\end{document}